\newtheorem{theoreme}{Theorem}[section]
\newtheorem{proposition}{Proposition}[section]
 \newtheorem{rem}{Remark}[section]
\renewcommand\Re{\mathrm{Re}\,} \renewcommand\Im{\mathrm{Im}\,}
\newcommand\R{{\mathbb R}} \newcommand\N{{\mathbb N}}
\newcommand\C{{\mathbb C}} 
\newcommand\T{{\mathbb T}}
  \newcommand\s{\sigma}
\renewcommand{\Im}{  \text{Im}   }
\renewcommand{\Re}{  \text{Re}   }
 \def\cdotv{\raise 2pt\hbox{,}}
\renewcommand\Re{\mathrm{Re}\,} \renewcommand\Im{\mathrm{Im}\,}
\def\@tvsp{\mathchoice{{}\mkern-4.5mu}{{}\mkern-4.5mu}{{}\mkern-2.5mu}{}}
\def\ltrivert{\left|\@tvsp\left|\@tvsp\left|}
\def\rtrivert{\right|\@tvsp\right|\@tvsp\right|}
 \def\cdotv{\raise 2pt\hbox{,}}
\begin{document}
\title{On the growth of Sobolev norms for NLS on $2d$ and $3d$ manifolds}
  \author{Fabrice Planchon}
  \address{    Universit\'e C\^ote d'Azur, CNRS, LJAD, France}
  \email{fabrice.planchon@unice.fr} 
\author{Nikolay Tzvetkov}
\address{
Universit\'e de Cergy-Pontoise,  Cergy-Pontoise, F-95000,UMR 8088 du CNRS
}
\email{nikolay.tzvetkov@u-cergy.fr}
\author{Nicola Visciglia}
\address{Dipartimento di Matematica, Universit\`a di Pisa, Italy}
\email{viscigli@dm.unipi.it}
\thanks{ The first author was
    partially supported by A.N.R. grant GEODISP and Institut Universitaire de France, the second author was partially supported by the ERC grant Dispeq, and 
    the third one was supported by the grant PRA 2016 Problemi di Evoluzione: Studio Qualitativo e Comportamento Asintotico} 
\date{\today}
 \maketitle
 \par \noindent

\begin{abstract}
Using suitable modified energies we study  higher order Sobolev norms' growth in time
for the nonlinear Schr\"odinger equation (NLS) on a generic $2d$ or $3d$ compact manifold.
In $2d$ we extend earlier results that dealt only with cubic nonlinearities, and get polynomial in time bounds for any higher order nonlinearities. In $3d$, we prove that solutions to the cubic NLS grow at most exponentially, while for sub-cubic NLS we get polynomial bounds on the growth of the 
$H^2$-norm.
\end{abstract}

\section{Introduction}

We are interested in long-time qualitative properties of solutions
to the following family of nonlinear Schr\"odinger equations,
\begin{equation}
  \label{eq:1}
  \begin{cases}
  i\partial_{t} u +\Delta_{g} u=|u|^{p-1} u, \quad (t, x)\in \R\times M^d\\
  u(0, x)=\varphi\in H^m(M^d)
  \end{cases}
\end{equation}
where $\Delta_g$ is the Laplace-Beltrami operator associated with the 
$d$-dimensional compact
Riemannian manifolds $(M^d, g)$ and $H^{m}(M^{d})$ the standard Sobolev space associated to $\Delta_{g}$, where $m\in \mathbb{N}$ with $m\geq 2$. More specifically we are interested in the analysis
of the possible growth of higher order Sobolev norms for large times, namely
the behavior of the quantity $\|u(t, x)\|_{H^m(M^d)}$ for $m\geq 2$ and $t\gg 1$.

This issue of the growth of higher order Sobolev norms has garnered a lot of attention in recent years, mainly because of its connection with the so called 
{\em weak wave turbulence}, e.g. a cascade of energy from low to high frequencies. In fact two main issues have been extensively studied in the literature: the first one concerns a priori bounds 
on how fast higher order Sobolev norms can grow along the flow associated with Hamiltonian PDEs
(see \cite{B3,B0,B,B2,CKO,D,GG,So1,So2,So3, S, JT, Z} and all the references therein); the second one concerns the existence of global solutions
whose higher order Sobolev norms are unbounded
(see \cite{CKSTT2, G, GK,H, HPTV,X} and all the references therein).

Here, we aim at dealing with the first problem, namely to provide a-priori
bounds on the growth of higher order Sobolev norms, or equivalently to understand 
how fast 
the dynamical system under consideration can move energy from the 
low frequencies to the high frequencies.  

First of all we point out that solutions to \eqref{eq:1} enjoy so-called mass and energy conservation laws:
\begin{align*}
\int_{M^d} |u(t, x)|^2 \hbox{dvol}_g&=\int_{M^d} |\varphi(x)|^2 \hbox{dvol}_g\\
\noindent\int_{M^d} (|\nabla_g u(t,x)|_g^2 + \frac 1{p+1} |u(t, x)|^{p+1})
\hbox{dvol}_g&=\int_{M^d} (|\nabla_g \varphi(x)|_g^2 + \frac 1{p+1} |\varphi(x)|^{p+1})
\hbox{dvol}_g
\end{align*}
where $\nabla_g$ and $|\hbox{ } . \hbox{ } |_g$ are respectively 
the gradient and the norm associated with the metric $g$, and
$|\hbox{ } . \hbox{ } |$ denotes the modulus of any complex number. These conservation laws immediately imply that 
\begin{equation}\label{H1tri}
\sup_{\R} \|u(t, x)\|_{H^1(M^d)}<\infty,\end{equation}
and therefore the growth in time of $H^m$ norms is only of interest for $m\geq 2$.\\
   
In the sequel, and according with the notations introduced above, 
we shall be interested in the following special cases:
\begin{align*}(d, p)&=(2, 2n+1) \hbox{ with } n\in \N, n\geq 1
\text{ ($2d$ manifold and odd integer nonlinearity)};\\\nonumber
(d, p)&=(3,3) \hbox{ ($3d$ manifold and cubic nonlinearity)};
\\\nonumber (d, p)&=(3,p) \hbox{ with } 2<p<3
\hbox{ ($3d$ manifold and sub-cubic nonlinearity)} .\end{align*}
In those setting, existence of local solutions follows by classical arguments, provided one assumes the initial datum to be $H^2$.
On the other hand, following \cite{BGT}, one can establish 
local (and hence global) Cauchy theory in $H^1$ for generic nonlinear potentials in the $2d$ case, as well as the local (and global) Cauchy theory  
in $H^{1+\epsilon}$ for cubic and sub cubic NLS
in the $3d$ case (see \cite{BGT} and \cite{BGT2}). From now on and for the sake of simplicity, we shall assume existence and uniqueness of a global solution, and focus on estimating the growth
of higher order Sobolev norms. However we point out that our argument not only provides polynomial bounds of such growth, but also yields an alternative proof of global existence. 

We will use as a basic tool (in fact, as a black box) available Strichartz estimates on manifolds (see \cite{BGT}, \cite{ST}) together with the introduction of suitable {\em modified energies},
which is the main new ingredient in this context. For this reason we will not discuss further the issue of global existence, which is indeed guaranteed by aforementioned previous results.
\\
We first start with the $2d$ case. It is worth mentioning that, to the authors' knowledge,
no results were available in the literature 
about growth of higher order Sobolev norms for NLS with higher than cubic nonlinearities, although one may reasonably believe that this problem could be addressed, at least in $2d$,
by adapting the strategy pioneered by Bourgain (see for instance \cite{Z}).
Nevertheless as a warm up we show how this problem can be handled 
by a completely different strategy, based on the introduction of suitable
{\em modified energies}: its benefit relies on a clear decoupling between higher order energy estimates relying on clever integration by parts and the (deep) input provided by dispersive estimates of Strichartz type. Moreover by using modified energies one can deal as well 
with generic nonlinear potential $V(|u|^2)$ rather than $|u|^{p-1}$, where $V$ may not necessarily be a pure power (see also remark \ref{pol3d} below).

We emphasize that modified energies have proved useful 
in different contexts (see for instance \cite{CR}, \cite{HITW},  \cite{KT}, \cite{Kw},
\cite{RS}, \cite{Ts}), but the present work seems to provide the first example where they are combined with dispersive bounds in order to get results on the growth of 
higher order Sobolev norms.

We underline that in our argument, being essentially based on integration by parts, we define and then compute the time derivative of suitable higher order energies
${\mathcal E}_m$, whose leading term
is essentially the norm $\|u(t,x) \|_{H^m}^2$. In fact, for $m=2k$ an even integer, one should think of $\|\partial_{t}^{k} u(t,x)\|^{2}_{L^{2}}$ as a good prototype of modified energy, up to lower order terms. In other words, one should think of replacing $\Delta_{g}$ by $\partial_{t}$ rather than the other way around when using the equation satisfied by $u$.

A direct consequence of this priviledged use of $\partial_{t}$ is that in our approach the geometry of the manifold 
is not directly involved in the computation, and integration by parts in the space variables, when required, is performed 
thanks to the following elementary identity, available
on any generic manifold:
$$\Delta_g (fh)=h\Delta_g f + 2 (\nabla_g f, \nabla_g h)_g + f\Delta_g h.$$
We also underline that the aforementioned energy ${\mathcal E}_m$
is not preserved along the flow, however by computing its time derivative along solutions,
we may estimate the resulting space-time integral taking advantage of dispersive bounds, namely  Strichartz estimates
with loss which are available on a generic manifold (or better ones when available).
\\
\\
In order to state our result in $2d$ we recall Strichartz estimates with loss:
\begin{equation}\label{strichintro}
\|e^{it \Delta_{g}} \varphi\|_{L^4((0,1) \times M^2)}
\lesssim \|\varphi\|_{H^{s_0}(M^2)}.
\end{equation}
For every $2d$ compact manifold $M^2$, \eqref{strichintro} is known to hold for $s_0= \frac 14$ (see \cite{BGT}).
Of course in some special cases the previous bound may be improved,
for instance 
on $\T^2$ one has any $s_0>0$ (see \cite{B3}, \cite{B0})
and any $s_0>\frac 18$ on the sphere (see \cite{BGT}). We can now state our first result.
\begin{theoreme}\label{main2d}
For every $\epsilon>0$, $m\in \N$ and for every $u(t,x)\in {\mathcal C}_t (H^m(M^2))$ solution to \eqref{eq:1}
where $d=2$ and $p=2n+1$ for $n\geq 1$, we get
\begin{equation}\label{growthm}
\sup_{(0,T)} \|u(t,x)\|_{H^m(M^2)} \leq C T^{\frac{m-1}{1-2s_0}+\epsilon},
\end{equation}
where $C=C(\epsilon,m, \|\varphi\|_{H^m})>0$ and $s_0\geq 0$ is given
in \eqref{strichintro}.
\end{theoreme}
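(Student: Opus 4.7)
The plan is to construct, for each integer $m\geq 2$, a modified energy $\mathcal{E}_m(t)$ whose principal part coincides with $\|u(t)\|^2_{H^m(M^2)}$ and whose time derivative along \eqref{eq:1} is controlled by a strictly subcritical power of $\sup_I\|u\|_{H^m}$ via Strichartz. Following the suggestion from the introduction, I start from
\[
\mathcal{E}_m^{(0)}(t) := \begin{cases} \|\partial_t^k u(t)\|_{L^2(M^2)}^2, & m=2k,\\[2pt] \|\nabla_g \partial_t^k u(t)\|_{L^2(M^2)}^2, & m=2k+1, \end{cases}
\]
and I iteratively use $i\partial_t u = -\Delta_g u + |u|^{2n}u$ to write $\partial_t^j u = (i\Delta_g)^j u + N_j(u)$, with $N_j$ polynomial in $u,\bar u$ and their spatial derivatives of strictly lower order. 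This yields $\mathcal{E}_m^{(0)} = \|u\|_{H^m}^2 + Q_m$, where $Q_m$ involves only derivatives of order $<m$; an induction on $m$ using \eqref{H1tri} then makes control of $\mathcal{E}_m^{(0)}$ equivalent to control of $\|u\|_{H^m}$ modulo a polynomial in $\|u\|_{H^{m-1}}$.

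Differentiating $\mathcal{E}_m^{(0)}$ along the flow, the linear contribution $\Im\int\Delta_g\partial_t^k u\,\overline{\partial_t^k u}\,\text{dvol}_g$ vanishes after integration by parts (using the Leibniz rule $\Delta_g(fh)=h\Delta_g f + 2(\nabla_g f,\nabla_g h)_g + f\Delta_g h$ valid on any Riemannian manifold), leaving a multilinear expression in $u,\bar u$ and their time derivatives. After applying Leibniz to $\partial_t^k(|u|^{2n}u)$, the \emph{diagonal} contributions---those in which all $k$ time derivatives fall on a single factor---are either real, hence vanish in $\Im$, or take the form $\Im\int P(u,\bar u)(\overline{\partial_t^k u})^2\,\text{dvol}_g$ with $P$ polynomial of degree $2n$. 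Such terms are not a priori controlled by a subcritical power of $\|u\|_{H^m}$ and must be removed. I do so by adding to $\mathcal{E}_m^{(0)}$ correction terms of the same general shape, with coefficients chosen so that, computing their time derivative via the equation, the bad boundary contributions cancel while the remainders (extra $\partial_t$'s falling on the scalar weight $P(u,\bar u)$) are of strictly lower effective regularity. The odd-$m$ case is analogous, with $\nabla_g\partial_t^k u$ in place of $\partial_t^k u$. One obtains a genuine modified energy $\mathcal{E}_m$ still of the form $\|u\|_{H^m}^2 + Q_m$ as above, whose time derivative is a finite sum of multilinear forms of degree $2n+2$ in which no single factor carries all $m$ derivatives with a non-integrable phase.

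To close the estimate on a unit interval $I=[t_0,t_0+1]$, I convert each $\partial_t^j$ back into $\Delta_g^j$ plus nonlinear corrections, and distribute the total derivative budget $m$ so that one factor sits in $L^\infty_t H^m_x$, one in $L^4_{t,x}$ at regularity $m-1+s_0$, and the remaining factors in $L^\infty_{t,x}$ at regularity controlled by $\|u\|_{H^{1+\epsilon}}$ (hence by \eqref{H1tri} up to an $\epsilon$-loss via Sobolev embedding). Strichartz \eqref{strichintro} applied to the appropriate $\partial_t^j u$, combined with Duhamel to absorb the nonlinear source, delivers the required $L^4_{t,x}$ bound with loss $s_0$. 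Balancing these exponents yields
\[
|\mathcal{E}_m(t_0+1)-\mathcal{E}_m(t_0)| \;\leq\; C_\epsilon \sup_I \|u\|_{H^m}^{2-\theta+\epsilon},\qquad \theta := \frac{1-2s_0}{m-1}.
\]
Summing over $\lfloor T\rfloor$ unit intervals and using $\mathcal{E}_m\sim \|u\|_{H^m}^2$ gives the discrete Gronwall bound $\sup_{[0,T]}\|u\|_{H^m}^{\theta-2\epsilon}\lesssim 1+T$, which is exactly \eqref{growthm}. The hardest step will be the systematic construction of the correction terms cancelling \emph{all} diagonal boundary contributions for arbitrary $m$ and $n$, together with the sharp multilinear estimate ensuring that precisely one factor can absorb the maximum regularity $m$ while the others remain controllable by Strichartz with loss on a generic compact manifold, where the absence of robust Littlewood-Paley tools forces one to do the derivative bookkeeping directly via the Leibniz rule.
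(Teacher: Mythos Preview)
Your plan matches the paper's: modified energies built from $\|\partial_t^k u\|_{L^2}^2$ (resp.\ $\|\nabla_g\partial_t^k u\|_{L^2}^2$ for odd $m$), equivalence with $\|u\|_{H^m}^2$ modulo lower order, removal of the top-order pieces of $\tfrac{d}{dt}\mathcal E_m$ via correction terms, $L^4$ Strichartz with loss on the time-differentiated solution over unit intervals, and iteration; your exponent $\theta=(1-2s_0)/(m-1)$ is exactly the one the paper obtains.

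One step as written does not close. Your H\"older distribution (one factor in $L^\infty_t H^m$, one in $L^4_{t,x}$, the rest in $L^\infty_{t,x}$) gives $\tfrac12+\tfrac14<1$ in the space variable, so the multilinear integral cannot be bounded that way. The paper places \emph{two} factors in $L^4_TL^4(M^2)$---typically the two gradient factors coming from $\partial_t^{k-1}(|\nabla_g u|_g^2)$, or two factors of the type $\partial_t^{k_l}u$ in $W^{1,4}$---one factor in $L^\infty_TL^2$, and the remaining $2n-1$ factors in $L^\infty_TL^\infty$; it is precisely the \emph{two} Strichartz losses of $s_0$ each that produce the $2s_0$ in your $\theta$, so your exponent is right but your route to it is not. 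On your ``hardest step'': the paper avoids any abstract coefficient-matching. Instead of looking for corrections that cancel $\Im\int P(u,\bar u)(\overline{\partial_t^k u})^2$, it substitutes the equation a second time, writing $i\partial_t^k u=-\Delta_g\partial_t^{k-1}u+\partial_t^{k-1}(|u|^{p-1}u)$, integrates by parts in space via $\Delta_g(|u|^2)=\bar u\Delta_g u+u\Delta_g\bar u+2|\nabla_g u|_g^2$, and then \emph{recognizes} the resulting top-order pieces as exact time derivatives. This yields the explicit corrections $-\tfrac{p-1}{4}\int|u|^{p-3}|\partial_t^{k-1}\nabla_g(|u|^2)|_g^2\,\hbox{dvol}_g$ and $-\int|\partial_t^{k-1}(|u|^{p-1}u)|^2\,\hbox{dvol}_g$ (and their odd-$m$ analogues), making the construction a direct computation rather than a search.
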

\begin{rem}\label{2dpolite}
We underline that the main point in order to establish
Theorem \ref{main2d} is the following bound: for all $T\in (0,1)$, $\epsilon>0$,
\begin{equation}\label{polite}
\|u(T)\|_{H^{m}(M^2)}^2 - \|u(0)\|_{H^{m}(M^2)}^2 \lesssim \sqrt T 
\|u\|_{L^\infty_T H^{m}(M^2)}^{\frac{2m-3+2s_0}{m-1} +\epsilon}+
\|u\|_{L^\infty_T H^{m}(M^2)}^{\frac{2m-4}{m-1}+\epsilon}\,.
\end{equation}
Once this bound is established then polynomial growth follows by a 
straightforward iteration argument. More precisely notice that the exponent 
$\frac{m-1}{1-2s_0}+\epsilon$ (that appears in the r.h.s of \eqref{growthm})
can be computed as the quantity
$\frac 1{2\gamma}$, where $2-2\gamma= \frac{2m-3+2s_0}{m-1} +\epsilon
$ is the power of the first term in the r.h.s. 
of \eqref{polite}.
\end{rem}

Next we present our result on the growth of higher order Sobolev norms
for the cubic NLS on a generic $3d$ compact manifold $M^3$. We recall that, following
\cite{BGT}, the Cauchy problem is globally well-posed for every initial data
$\varphi\in H^{1+\epsilon_0}(M^3)$, and that, following the crucial use of logarithmic Soboleve type inequalities,  one can get the following double exponential bound,
$$\sup_{(0,T)} \|u(t, x)\|_{H^m(M^3)}\leq C \exp(\exp (CT)).$$
Our main contribution is an improvement on the bound above,
indeed we will replace the double exponential with a single one.
It should be emphasized that, in the $3d$ case, it is at best unclear to us how Bourgain original argument and derivatives thereof could be used in order to get
Theorem \ref{main3d}. More specifically, in $3d$ our use of modified energies appears to be a key tool in order to eliminate one exponential out of two.
\begin{theoreme}\label{main3d}
For every $m\in \N$ and for every $u(t,x)\in {\mathcal C}_t (H^m(M^3))$ solution to \eqref{eq:1}
where $(d,p)=(3,3)$ 
we have:
$$\sup_{(0,T)} \|u(t, x)\|_{H^m(M^3)}\leq C \exp (CT)$$
where $C=C(m, \|\varphi\|_{H^m})>0$.
\end{theoreme}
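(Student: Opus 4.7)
\emph{Approach.} The plan is to adapt the modified-energy machinery developed for Theorem \ref{main2d} to the critical $3d$ cubic setting, the goal being to replace the double exponential (obtained via Br\'ezis--Gallouet-type logarithmic Sobolev arguments) by a single exponential. I proceed by induction on $m \geq 2$. The base uses that $\|u(t)\|_{H^1}$ is uniformly bounded by \eqref{H1tri} and that $\|u(t)\|_{H^{1+\epsilon_0}}$ is controlled by the BGT global theory, while the inductive step assumes the stated exponential bound for $\|u\|_{H^{m-1}}$.

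\emph{Modified energy.} For $m = 2k$ even I define $\mathcal{E}_m(u)$ with principal part $\|\partial_t^k u\|_{L^2}^2$ and explicit lower-order correction polynomials in $u$ and its space derivatives, chosen so that $\mathcal{E}_m(u) = \|u\|_{H^m}^2 + \text{l.o.t.}$, where l.o.t.\ depends only on $\|u\|_{H^{m-1}}$; for odd $m = 2k+1$ I take instead $\|\nabla_g \partial_t^k u\|_{L^2}^2$ as leading term. This substitution is natural because, on solutions of \eqref{eq:1}, $\partial_t^k u$ differs from $(i\Delta_g)^k u$ only by multilinear expressions of total order at most $2k-1$, and the comparison requires no geometric hypothesis on $M^3$: it is performed via the elementary Leibniz identity $\Delta_g(fh) = h\Delta_g f + 2(\nabla_g f, \nabla_g h)_g + f \Delta_g h$.

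\emph{Energy identity and estimate.} Setting $v := \partial_t^k u$, differentiating \eqref{eq:1} $k$ times in time yields $i\partial_t v + \Delta_g v = R_k(u)$, with $R_k$ multilinear in $u, \bar u$ and derivatives of total order at most $2k-1$ and cubic in $u$. A mass-type computation gives
$$\frac{d}{dt}\|v\|_{L^2}^2 = 2\,\Im \int_{M^3} R_k(u)\,\bar v\,d\mathrm{vol}_g.$$
The ``diagonal'' cubic piece $2|u|^2 |v|^2$, being real, drops out under $\Im$; the correction terms in $\mathcal{E}_m$ are designed precisely to absorb the remaining quadratic-in-$v$ contributions. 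The surviving terms are genuinely off-diagonal, the prototype being $\int u^2 \bar v^2\, d\mathrm{vol}_g$. By H\"older, this is bounded by $\|u\|_{L^4}^2 \|v\|_{L^4}^2$; using BGT Strichartz on a unit interval $[t, t+1]$ together with the Duhamel formula for both $u$ and $v$, and invoking the inductive control of $\|u\|_{H^{m-1}}$, I obtain
$$\mathcal{E}_m(t+1) - \mathcal{E}_m(t) \leq C \, \mathcal{E}_m(t),\qquad C = C(\|\varphi\|_{H^m}).$$
A discrete Gronwall then yields $\mathcal{E}_m(t) \lesssim \mathcal{E}_m(0) e^{Ct}$ and therefore the desired exponential bound on $\|u(t)\|_{H^m}$.

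\emph{Main obstacle.} The chief difficulty is the absence, in dimension three, of an $L^\infty$ embedding for $H^1$: unlike in Theorem \ref{main2d}, every zero-derivative factor of $u$ that appears inside the multilinear remainders must be routed through a Strichartz norm rather than controlled by conservation laws. This forces the correction terms in $\mathcal{E}_m$ to be chosen with some care, and in particular requires using Strichartz bounds not only on $u$ but also on $v$ (via the equation $i\partial_t v + \Delta_g v = R_k(u)$). The combinatorial bookkeeping of which factors carry how many derivatives, so that every factor can be placed in a Strichartz- or inductively-controlled norm without ever producing a logarithmic correction, is the delicate point; it is precisely this that is responsible for the gain of one exponential over the naive logarithmic-Sobolev argument.
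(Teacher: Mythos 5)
Your proposal follows essentially the same route as the paper: a modified energy with leading term $\|\partial_t^k u\|_{L^2}^2$ (resp. $\|\nabla_g\partial_t^k u\|_{L^2}^2$ for odd $m$), the comparison $\partial_t^k u\approx (i\Delta_g)^k u$ up to lower order, an energy identity whose right-hand side is estimated by applying endpoint Strichartz bounds to $\partial_t^j u$ through its own inhomogeneous equation, and a local estimate of the form $T\|u\|_{L^\infty_T H^m}^2+\|u\|_{L^\infty_T H^m}^\gamma$ with $\gamma<2$ that iterates to a single exponential. The only notable differences are cosmetic: the paper controls the lower-order corrections by interpolation against the conserved $H^1$ norm rather than by induction on $m$, and the decisive technical input you leave implicit is the inhomogeneous $L^2_tL^6_x$ estimate \eqref{duhamel3}, in which the half-derivative loss is carried by $\|v\|_{L^2_TH^{1/2}}$ (hence comes with a factor $\sqrt T$) rather than by the data or the forcing term.
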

\begin{rem}
The proof of Theorem \ref{main3d} follows by a straightforward
iteration once the following bound is established: for all $ T\in (0,1)$,
$$\|u(T)\|_{H^{m}(M^3)}^2 - \|u(0)\|_{H^{m}(M^3)}^2 \lesssim 
T \|u\|_{L^\infty_T H^{m}(M^3)}^2 + \|u\|_{L^\infty_T H^{m}(M^3)}^\gamma,
$$
where $\gamma\in (0,2)$ is a suitable number.
\end{rem}

Finally, we end our presentation with a result dealing with NLS on a $3d$ compact manifold 
$M^3$ with sub-cubic nonlinearity, establishing  polynomial growth for the $H^2$ Sobolev norm.
Remark that it makes no sense to consider higher order Sobolev norms, given that the nonlinearity is not smooth enough in order to guarantee that the regularity $H^m$, with $m>2$, is preserved along the evolution.

Neverhtless we emphasize that the next result appears to be the first one available in the literature about polynomial growth of any  Sobolev norms above the energy, on a generic $3d$ compact manifold.
\begin{theoreme}\label{main3dsub} 
For every $u(t,x)\in {\mathcal C}_t (H^2(M^2))$ solution to \eqref{eq:1}
with $d=3$ and $p\in (2, 3)$ we have:
$$\sup_{(0,T)} \|u(t, x)\|_{H^2(M^3)}\leq CT^{\frac 4{3-p}}\,,$$
where $C=C(\|\varphi\|_{H^2})>0$.
\end{theoreme}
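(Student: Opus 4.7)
\textbf{Proof plan for Theorem \ref{main3dsub}.}

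The strategy is to follow the modified-energy philosophy developed in the paper, now applied at regularity $H^2$ to a nonlinearity that is not smooth enough to permit higher-order energy estimates. The key heuristic emphasized by the authors is to use $\partial_t$ in place of $\Delta_g$, so that the non-smooth nonlinearity $|u|^{p-1}u$ is only differentiated once during the computation.

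The first step is to introduce a modified energy of the form
\[
\mathcal{E}_2(u)=\|\partial_t u\|_{L^2(M^3)}^2+(\text{polynomial-in-}u\text{ correction}),
\]
with the correction tailored so that $\mathcal{E}_2(u)-\|u\|_{H^2(M^3)}^2$ is uniformly bounded via the conservation of mass and energy \eqref{H1tri}; in particular $\mathcal{E}_2(u)\sim\|u\|_{H^2(M^3)}^2$ up to bounded quantities. Differentiating in time, using $\partial_t u=i\Delta_g u-i|u|^{p-1}u$, and applying the chain rule $\partial_t(|u|^{p-1}u)=|u|^{p-1}\partial_t u+(p-1)|u|^{p-3}\Re(\bar u\,\partial_t u)\,u$, one obtains
\[
\tfrac{d}{dt}\|\partial_t u\|_{L^2}^2=-2\,\Im\!\int_{M^3}\partial_t(|u|^{p-1}u)\,\overline{\partial_t u}.
\]
The contribution $|u|^{p-1}|\partial_t u|^2$ is real-valued and vanishes under $\Im$; what survives is dominated in absolute value by $(p-1)\int_{M^3}|u|^{p-3}\,|u|^2\,|\partial_t u|^2=(p-1)\int_{M^3}|u|^{p-1}|\partial_t u|^2$, and an analogous computation controls the correction terms.

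The second step is to control $\int_0^T\!\!\int_{M^3}|u|^{p-1}|\partial_t u|^2\,dt$ using H\"older in space-time together with the Strichartz estimates with loss on $3$d compact manifolds from \cite{BGT2}. For example, the admissible pair $(2,6)$ gives $\|u\|_{L^2_T L^6}\lesssim\|u\|_{L^\infty_T H^{1/2}}$; combined with the embedding $H^s\hookrightarrow L^\infty$ for $s>3/2$, the interpolation $\|u\|_{H^{1+\eps}}\lesssim\|u\|_{H^1}^{1-\eps}\|u\|_{H^2}^{\eps}$, and the uniform $H^1$ bound \eqref{H1tri}, one gets space-time control of $\|u\|^{p-1}_{L^{q}_T L^\infty}$ that costs only a small power of $\|u\|_{L^\infty_T H^2}$. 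Pairing this with $\|\partial_t u\|_{L^2}\lesssim\|u\|_{H^2}+\|u\|_{L^{2p}}^p$ and tuning the H\"older exponents to extract the full gain coming from $p-1<2$, the bookkeeping yields the small-time iteration inequality
\[
\|u(T)\|_{H^2(M^3)}^2-\|u(0)\|_{H^2(M^3)}^2\lesssim T\,\|u\|_{L^\infty_T H^2(M^3)}^{(5+p)/4}+\|u\|_{L^\infty_T H^2(M^3)}^{\gamma},\qquad T\in(0,1),
\]
for some $\gamma<(5+p)/4$.

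Finally, because $p<3$ the exponent $(5+p)/4$ is strictly less than $2$, so the small-time inequality iterates --- exactly as in Remark \ref{2dpolite} and in the remark following Theorem \ref{main3d} --- to produce polynomial rather than exponential growth: balancing $T\,f^{(5+p)/8}\sim f$ with $f=\|u\|_{H^2}^2$ gives $f(t)\lesssim t^{8/(3-p)}$, i.e.\ $\|u(t)\|_{H^2}\lesssim t^{4/(3-p)}$, as claimed. The main obstacle is that the Strichartz loss in $3$d is only marginally below the critical power $2$, so extracting the precise gain $(3-p)$ from sub-cubic scaling requires tuning every H\"older and interpolation exponent very carefully; moreover, since $|u|^{p-1}u$ does not preserve $H^m$-regularity for $m>2$, the argument must close at the $H^2$ level with no possibility of bootstrapping to higher regularity.
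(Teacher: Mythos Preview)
Your proposal has a genuine gap at the most important step. You correctly compute that
\[
\frac{d}{dt}\|\partial_t u\|_{L^2}^2 \;\text{is bounded by}\; C\int_{M^3}|u|^{p-1}|\partial_t u|^2\,\hbox{dvol}_g,
\]
but this quantity cannot be estimated by $T\|u\|_{L^\infty_T H^2}^{(p+5)/4}$; in fact it cannot be bounded by any power of $\|u\|_{L^\infty_T H^2}$ strictly below $2$. The reason is that at the $H^2$ level both factors of $\partial_t u$ are forced into $L^2_x$ (you have no access to $\partial_t u$ in $L^q_x$ with $q>2$ without paying $H^{2+s}$ on $u$, and Strichartz applied to $\partial_t u$ loses half a derivative, i.e.\ needs $\|u\|_{H^{5/2}}$). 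Hence $\int |u|^{p-1}|\partial_t u|^2 \ge c\,\|\partial_t u\|_{L^2}^2$ whenever $|u|\ge c$ on a set of positive measure, and the best you can do is $T\|u\|_{L^\infty_T H^2}^{2+(p-1)/2+\epsilon}$, which iterates only to an exponential bound.

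The point you are missing is that the correction terms in the modified energy are \emph{not} there to make $\mathcal E_2$ comparable to $\|u\|_{H^2}^2$ (that equivalence is automatic); they are engineered so that, after differentiation in time, one of the two $\partial_t u$ factors is \emph{cancelled} and replaced by spatial gradients $\nabla_g u$. Concretely, the paper uses
\[
\mathcal F_2(u)=\|\partial_t u\|_{L^2}^2-(p-1)\int |u|^{p-1}|\nabla_g|u||_g^2-\tfrac{p-1}{p}\int |u|^{2p},
\]
and after the same substitution $i\partial_t u=-\Delta_g u+|u|^{p-1}u$ followed by an integration by parts in space (using $\Delta_g(|u|^2)=2\Re(\bar u\Delta_g u)+2|\nabla_g u|_g^2$), one obtains
\[
\frac{d}{dt}\mathcal F_2(u)=c_1\int |u|^{p-2}\partial_t|u|\,|\nabla_g|u||_g^2+c_2\int |u|^{p-2}\partial_t|u|\,|\nabla_g u|_g^2.
\]
Now there is only \emph{one} $\partial_t$ factor, paired with $|\nabla_g u|^2$. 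One estimates $\partial_t|u|$ in $L^\infty_T L^2$ (costing one full power of $\|u\|_{H^2}$) and the two $\nabla_g u$ in $L^q_T L^{12/(5-p)}$ via interpolation between $L^\infty_T H^1$ and the Strichartz norm $L^2_T W^{1,6}$; the latter, at the level of $u$ rather than $\partial_t u$, loses only half a derivative and thus costs $\|u\|_{H^2}^{1/2}$ at worst. This is what produces the sub-quadratic exponent $(p+5)/4$. Without the cancellation mechanism, your argument cannot close.
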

\begin{rem}\label{pol3d}
Analogously to the theorems above this result follows once
the following local bound is established: for all $T\in (0,1)$,
$$\|u(T)\|_{H^{2}(M^3)}^2 - \|u(0)\|_{H^{2}(M^3)}^2
\lesssim 
T\|u\|_{L^\infty_T H^2(M^3)}^\frac{p+5}4+ \|u\|_{L^\infty_T H^2(M^3)}^\gamma,
$$
for some $\gamma\in (0, \frac{p+5}4)$. One should point out that, following our approach to proving \eqref{pol3d}, there is no need to restrict oneself to pure power nonlinearities. In particular polynomial growth for solutions to NLS on a generic $3d$ compact manifolds 
could be established for general higher order Sobolev norms (namely $H^m$ with $m\geq 2$), provided that the sub cubic nonlinearity is suitably regularized in order to guarantee that $H^m$ regularity is preserved along the flow. Neverthless, for the sake of simplicity we elected not to deal with the full generality in the present work.
\end{rem}

\section{Linear Strichartz Estimates}
\subsection{Strichartz Estimates on $M^2$}
In the sequel we shall make use without any further comment 
of the following Strichartz estimate which was already recalled in the introduction:
\begin{equation}\label{strich}
\|e^{it \Delta_{g}} \varphi\|_{L^4((0,1) \times M^2)}\lesssim
\|\varphi\|_{H^{s_0}(M^2)}.
\end{equation}
By using Duhamel formula we also have at our disposal an inhomogeneous estimate that we state as an independent proposition.
\begin{proposition}
Let $v(t,x)$ be solution to
$$\begin{cases}
i\partial_t v + \Delta_g v=F, \quad (t, x)\in \R\times M^2\\
u(0, x)=\varphi\in H^{s_0}(M^2)
\end{cases}$$
then we have, for $T\in (0,1)$
\begin{equation}\label{duhamel}
\|v\|_{L^4((0,T) \times M^2)}\lesssim 
\|\varphi\|_{H^{s_0}(M^2)}+ T \|F\|_{L^\infty((0,T); H^{s_0}(M^2))}.
\end{equation}
\end{proposition}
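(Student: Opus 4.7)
The plan is to reduce to the homogeneous Strichartz estimate \eqref{strich} via Duhamel's formula. Writing
\[
v(t)= e^{it\Delta_g}\varphi - i\int_0^t e^{i(t-s)\Delta_g}F(s)\,ds,
\]
I would split $\|v\|_{L^4((0,T)\times M^2)}$ into the contributions of the two terms and estimate them separately.

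For the homogeneous part, since $T\in(0,1)$ the time interval $(0,T)$ is contained in $(0,1)$, so \eqref{strich} directly gives $\|e^{it\Delta_g}\varphi\|_{L^4((0,T)\times M^2)}\lesssim \|\varphi\|_{H^{s_0}(M^2)}$, which accounts for the first term in the right-hand side.

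For the Duhamel term I would apply Minkowski's inequality in $s$, pulling the $s$-integral outside the $L^4_{t,x}$ norm:
\[
\left\|\int_0^t e^{i(t-s)\Delta_g}F(s)\,ds\right\|_{L^4((0,T)\times M^2)}
\leq \int_0^T \|e^{i(t-s)\Delta_g}F(s)\|_{L^4_t((s,T); L^4_x(M^2))}\,ds.
\]
For each frozen $s\in(0,T)$, the change of variable $\tau=t-s$ brings the inner norm into the form $\|e^{i\tau\Delta_g}F(s)\|_{L^4((0,T-s)\times M^2)}$, and since $T-s<1$ another application of \eqref{strich} (to the datum $F(s)$) yields the bound $\lesssim \|F(s)\|_{H^{s_0}(M^2)}$. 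Integrating over $s\in(0,T)$ gives the factor $T\|F\|_{L^\infty_T H^{s_0}(M^2)}$, producing the second term on the right-hand side of \eqref{duhamel}.

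There is no serious obstacle here: the only point requiring mild care is that one must freeze $s$ before applying \eqref{strich}, using Minkowski to exchange the $ds$-integral with the space-time $L^4$ norm, and exploit $T<1$ so that the time intervals of length $T-s$ arising after translation remain admissible for \eqref{strich} without any loss.
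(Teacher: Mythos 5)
Your proof is correct and follows exactly the route the paper intends: the paper states this proposition without proof, merely invoking the Duhamel formula, and your argument (Duhamel decomposition, the homogeneous estimate \eqref{strich} for the free part, and Minkowski in $s$ followed by \eqref{strich} applied to each frozen $F(s)$ for the inhomogeneous part) is the standard way to fill in that omitted detail.
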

\subsection{Strichartz Estimates on $M^3$}
Along the proof of Theorems \ref{main3d} and 
\ref{main3dsub} we shall make use of the following
suitable version of the endpoint Strichartz estimate:
\begin{proposition}
Let $v(t,x)$ be solution to 
$$
i\partial_t v + \Delta_g v=F, \quad (t, x)\in \R\times M^3.
$$
Then we have, for $T\in (0,1)$,
\begin{equation}\label{duhamel3}
\|v\|_{L^2((0,T); L^6(M^3))}\lesssim_{\epsilon} 
\|v\|_{L^\infty((0,T);H^{\epsilon}(M^3))}+
\|v\|_{L^2((0,T);H^{1/2}(M^3))}+ \|F\|_{L^2(0,T); L^{6/5}(M^3))}\,.
\end{equation}
\end{proposition}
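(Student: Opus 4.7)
The plan is a frequency-localised Strichartz argument. Littlewood--Paley decompose $v = P_{\leq 1}v + \sum_{N\geq 2} P_N v$, dispose of the low-frequency piece trivially, and treat each $v_N := P_N v$ (which satisfies $(i\partial_t + \Delta_g) v_N = P_N F$ and hence admits a Duhamel formula) on short time windows where the endpoint inhomogeneous Strichartz loses no derivatives. The low-frequency contribution is handled by Bernstein and the smallness of $T$: $\|P_{\leq 1}v\|_{L^2((0,T); L^6)} \lesssim T^{1/2}\|v\|_{L^\infty L^2} \leq \|v\|_{L^\infty H^\epsilon}$, any $\epsilon \geq 0$ being admissible.

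The essential local building block is a scale-invariant endpoint Strichartz estimate at frequency $N$: on any interval $I$ of length $|I|\lesssim N^{-1}$ and any $t_0\in I$,
$$\|v_N\|_{L^2(I; L^6)} \lesssim \|v_N(t_0)\|_{L^2} + \|P_N F\|_{L^2(I; L^{6/5})}.$$
This follows from the standard semi-classical rescaling $\tau=ht$ with $h=N^{-1}$: in semi-classical coordinates $v_N$ lives on the unit time interval, where the dispersive bound $|t|^{-3/2}$ for the semi-classical Schr\"odinger flow on $M^3$ holds and the Keel--Tao endpoint theorem applies; see \cite{BGT}. Partitioning $[0,T]$ into sub-intervals $I_j$ of length $\sim N^{-1}$ and picking $t_j \in I_j$ minimising $\|v_N(t_j)\|_{L^2}$ (so that $\|v_N(t_j)\|_{L^2}^2\lesssim N\int_{I_j}\|v_N\|_{L^2}^2\,dt$), squaring and summing in $j$ yields
$$\|v_N\|_{L^2((0,T); L^6)}^2 \lesssim N\|v_N\|_{L^2 L^2}^2 + \|P_N F\|_{L^2 L^{6/5}}^2.$$

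Summation in $N$ then fits together through the Littlewood--Paley square-function inequalities: since $6\geq 2$, Minkowski gives $\|v\|_{L^2 L^6}^2 \lesssim \|P_{\leq 1}v\|_{L^2 L^6}^2 + \sum_N \|v_N\|_{L^2 L^6}^2$ for the left-hand side; since $6/5\leq 2$, the reverse Minkowski gives $\sum_N \|P_N F\|_{L^2 L^{6/5}}^2 \lesssim \|F\|_{L^2 L^{6/5}}^2$ for the forcing; and $\sum_N N\|v_N\|_{L^2 L^2}^2 \sim \|v\|_{L^2 H^{1/2}}^2$ by the definition of $H^{1/2}$. Combining these bounds yields the announced estimate. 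The main obstacle of the argument is precisely the scale-invariant endpoint Strichartz of the second paragraph, namely invoking Keel--Tao uniformly in $N$ at the endpoint pair $(L^2L^6, L^2L^{6/5})$ on a generic compact $M^3$; this is exactly the semi-classical input one imports from \cite{BGT}, after which everything else reduces to pure Littlewood--Paley bookkeeping.
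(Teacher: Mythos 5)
Your proof is correct and follows essentially the same route as the paper's: the paper quotes the spectrally localized estimate $\|\pi_N v\|_{L^2L^6}\lesssim \|\pi_N v\|_{L^\infty L^2}+\|\pi_N v\|_{L^2 H^{1/2}}+\|\pi_N F\|_{L^2L^{6/5}}$ as a black box from \cite{BGT}, \cite{ST} and Proposition 5.4 of \cite{BT}, and then squares and sums over dyadic $N$ exactly as you do, via the square-function characterization of the $L^6$ and $L^{6/5}$ norms. The only cosmetic difference is that you unpack that black box (the $N^{-1}$-window semiclassical Keel--Tao argument) and route the $L^\infty H^{\epsilon}$ term through the low-frequency block alone, whereas the paper keeps an $L^\infty L^2$ term at every frequency and sums it using the $\epsilon$ of room.
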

Notice that the above estimate may look somewhat unusual compared with the classical version of Strichartz
estimates, where on the r.h.s. one expects a norm involving
the initial datum $v(0,x)$ and another norm involving 
the forcing term $F(t,x)$.\\
Neverthless we underline 
that in the case $F=0$, the estimate above reduces to the usual
Strichartz estimate with loss of $1/2$ derivative (see \cite{BGT} and \cite{ST}). On the other hand the main point of \eqref{duhamel3} is that no derivative losses occur on the forcing term $F(t,x)$
when this term is not identically zero, and the loss of derivative is indeed absorbed 
by the solution $v(t,x)$. Estimates of this spirit are also of crucial importance in the low regularity well-posedness theory for quasi-linear dispersive PDE (see e.g. \cite{KTz}).
We emphasize that the estimate \eqref{duhamel3} comes
from the following spectrally localized version (see \cite{BGT}, \cite{ST} and for more details
Proposition 5.4 in \cite{BT}):
\begin{align*}
\|\pi_N v\|_{L^2((0,1); L^6(M^3))}&\lesssim \|\pi_N v\|_{L^\infty(0,1);L^{2}(M^3)}
\\\nonumber &+\|\pi_N v\|_{L^2(0,1);H^{1/2}(M^3)}
+\|\pi_N F\|_{L^2(0,1); L^{6/5}(M^3))}
\end{align*}
where $(\pi_N)$ are the usual Littlewood-Paley spectral projectors
and $N$ ranges over dyadic numbers.
In fact by taking squares and summing over $N$ we get \eqref{duhamel3},
provided that we make use of the following bound,
$$\sum_N \|\pi_N v\|_{L^\infty ((0,1);L^2(M^3))}^2\leq \sum_N \frac 1{N^\epsilon} \|v\|_{L^\infty( (0,1);H^{\epsilon}(M^3))}^2$$
together with the equivalence of the $L^r$ norm of $v$ with the $L^r$ ($1<r<\infty$) norm of its squared function 
$(\sum_N |\pi_N v|^2)^\frac 12$. 
\section{Modified Energies associated with Even Sobolev norms}
\subsection{Modified Energies}
In this subsection we consider the general Cauchy problem
\begin{equation}\label{genericNLS}
\begin{cases}
i\partial_{t} u +\Delta_{g} u=|u|^{p-1} u,
\quad (t, x)\in \R\times M^d\\
u(0, x)=\varphi\in H^{2k}(M^d)
\end{cases}
\end{equation}
where $(M^d, g)$ is a compact $d$-dimensional Riemannian manifold.

In the sequel we shall extensively make use of the following bound without further notice:
\begin{equation}\label{H1ener}
\|u\|_{L^\infty(\R; H^1(M^d))}\lesssim_{_{p,\|\varphi\|_{H^1}}}1 \,.
\end{equation}
For every solution $u(t, x)$ to the Cauchy problem \eqref{genericNLS}
we introduce the following energy, to be used in connection with growth of the Sobolev norm $H^{2k}$:
\begin{align*}{\mathcal E}_{2k}(u)&=
\|\partial_t^k u\|_{L^2(M^d)}^2-\frac{p-1}4 \int_{M^d} |\partial^{k-1}_{t} \nabla_g (|u|^2)|_g^2 |u|^{p-3} \hbox{dvol}_g \\\nonumber&- \int_{M^d}|\partial^{k-1}_{t} (|u|^{p-1} u)|^2 \hbox{dvol}_g.
\end{align*}
We have the following key proposition.
\begin{proposition}\label{energy}
Let $u(t, x)$ be solution to \eqref{genericNLS} where $p=2n+1\geq 3$,
with initial data $\varphi\in H^{2k}(M^d)$,
then we have the following identity:
\begin{multline}\label{energy2}
\frac d{dt} {\mathcal E}_{2k}(u(t,x))=
-\frac{p-1}4 \int_{M^d} |\partial^{k-1}_{t} \nabla_g (|u|^2)|_g^2 \partial_t (|u|^{p-3}) \hbox{dvol}_g \\
{}+ 2  \int_{M^d} \partial^{k}_{t} (|u|^{p-1})\partial^{k-1}_{t}
(|\nabla_g u|_g^2) \hbox{dvol}_g
\\{}+ \sum_{j=0}^{k-1} c_j \int_{M^d} (\partial^{j}_{t} \nabla_g (|u|^2),
\partial^{k-1}_{t}  \nabla_g (|u|^2)_g \partial_t^{k-j} (|u|^{p-3}) \hbox{dvol}_g 
\\
{} + \Re \sum_{j=0}^{k-1} c_j    \int_{M^d}   
\partial^{j}_{t} (|u|^{p-1})  \partial_t^{k-j} u \partial_{t}^{k-1 }(|u|^{p-1}\bar u)
\hbox{dvol}_g,
\\
{}+ {\Re} \sum_{j=0}^{k-2}
c_j \int_{M^d} \partial^{k}_{t} (|u|^{p-1}) \partial_t^j (\Delta_g \bar u) \partial_t^{k-1-j} u
\hbox{dvol}_g
\\{}+ \Im \sum_{j=1}^{k-1} 
 c_j  \int_{M^d} \partial_t^j (|u|^{p-1}) \partial_t^{k-j} u \partial^{k}_{t}\bar u \hbox{dvol}_g
\end{multline}
where $c_j$ denote suitable constants that may change from line to line.
\end{proposition}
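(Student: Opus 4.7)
The plan is to differentiate each of the three summands of ${\mathcal E}_{2k}$ separately, systematically using the equation in the forms $\partial_t u = i\Delta_g u - i|u|^{p-1}u$ and $\overline{\partial_t u} = -i\Delta_g \bar u + i|u|^{p-1}\bar u$ to convert every ``extra'' time derivative into a spatial one, then applying Leibniz to every product that appears in order to separate top-order from lower-order contributions. The constants $c_j$ on the right-hand side absorb whatever binomial coefficients drop out of these expansions.

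I would begin with $\tfrac{d}{dt}\|\partial_t^k u\|_{L^2}^2$. Substituting $\partial_t^{k+1} u = i\Delta_g \partial_t^k u - i\partial_t^k(|u|^{p-1}u)$ into $2\,\Re\int \partial_t^{k+1}u\,\overline{\partial_t^k u}$ kills the Laplacian piece (since $\int(\Delta_g v)\bar v\,\hbox{dvol}_g$ is real), leaving $2\,\Im\int \partial_t^k(|u|^{p-1}u)\overline{\partial_t^k u}$. Leibniz on $\partial_t^k(|u|^{p-1}u) = \sum_{j=0}^k \binom{k}{j} \partial_t^j(|u|^{p-1})\partial_t^{k-j}u$ isolates: a $j=0$ summand $\int |u|^{p-1}|\partial_t^k u|^2$ which is real and drops; a middle block $1\le j\le k-1$ that becomes the last line of the identity; and a ``bad'' $j=k$ summand to be treated next. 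A completely parallel computation for $-\tfrac{d}{dt}\int|\partial_t^{k-1}(|u|^{p-1}u)|^2 = -2\,\Re\int \partial_t^k(|u|^{p-1}u)\,\partial_t^{k-1}(|u|^{p-1}\bar u)$ produces the fourth-line contributions for $0\le j\le k-1$ and a second ``bad'' $j=k$ summand.

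Next I would combine the two bad terms. Using the conjugate equation once, $\overline{\partial_t^k u} = -i\partial_t^{k-1}\Delta_g\bar u + i\partial_t^{k-1}(|u|^{p-1}\bar u)$; plugging this into the bad piece from $\tfrac{d}{dt}\|\partial_t^k u\|_{L^2}^2$, the contribution from $i\partial_t^{k-1}(|u|^{p-1}\bar u)$ cancels the bad piece from the $-C$ derivative exactly, leaving only $-2\,\Re\int \partial_t^k(|u|^{p-1})\,u\,\partial_t^{k-1}\Delta_g\bar u$. I would then expand $\partial_t^{k-1}\Delta_g\bar u$ via Leibniz in time: indices $0\le j\le k-2$ reproduce directly the fifth-line $\Delta_g\bar u$-sum, while the diagonal $j=k-1$ index is integrated by parts in space using the generic manifold identity $\Delta_g(fh) = f\Delta_g h + 2(\nabla_g f,\nabla_g h)_g + h\Delta_g f$, and the equation $\Delta_g\bar u = |u|^{p-1}\bar u + i\partial_t\bar u$ sends the two pure-Laplacian pieces into contributions that are absorbed into the fourth and sixth lines.

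The last contribution is $-\tfrac{d}{dt}B$: the derivative falling on $|u|^{p-3}$ gives the first term of the identity, while the cross term $-\tfrac{p-1}{2}\int |u|^{p-3}(\partial_t^k \nabla_g(|u|^2),\partial_t^{k-1}\nabla_g(|u|^2))_g$ is integrated by parts in space so that $\partial_t^k\nabla_g(|u|^2)$ sheds its gradient, producing both a Laplacian term $\int|u|^{p-3}\partial_t^k(|u|^2)\partial_t^{k-1}\Delta_g(|u|^2)$ and a gradient-gradient piece involving $\nabla_g(|u|^{p-3})$; expanding $\Delta_g(|u|^2) = 2|\nabla_g u|_g^2 + u\Delta_g \bar u + \bar u\Delta_g u$ and using the equation on the last two summands yields the $|\nabla_g u|_g^2$ contribution of the second line, while a further Leibniz expansion in time of the resulting $\partial_t^{k-1}\Delta_g(|u|^2)$ organizes the remaining terms into the third-line sum over $j$. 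The main obstacle will be precisely this last block: identifying the second and third lines requires carefully combining the residual from the ``bad''-term cancellation with the $-\tfrac{d}{dt}B$ cross term, and tracking the manifold integrations by parts delicately enough so that the surviving top-order expressions rearrange into exactly the announced sums without leaving orphans.
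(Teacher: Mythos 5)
Your handling of the first two pieces of ${\mathcal E}_{2k}$ coincides with the paper's computation: the Laplacian contribution to $\frac d{dt}\|\partial_t^ku\|_{L^2}^2$ vanishes, the $j=0$ Leibniz term is real and drops, the middle block gives the sixth line, and the $j=k$ ``bad'' term, after one more substitution of the equation, splits into a piece that cancels $-\frac d{dt}\int|\partial_t^{k-1}(|u|^{p-1}u)|^2$ up to the fourth-line corrections, leaving the residual $2\Re(\partial_t^k(|u|^{p-1})u,-\Delta_g\partial_t^{k-1}u)$. Up to that point the proposal is correct and matches the paper.

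The gap is in the last block, which you flag as the main obstacle but then resolve with moves that do not lead to the stated identity. First, applying the equation $\Delta_g\bar u=|u|^{p-1}\bar u+i\partial_t\bar u$ to the ``pure-Laplacian pieces'' $u\Delta_g\bar u+\bar u\Delta_g u$ produces $\Re\int\partial_t^k(|u|^{p-1})\,u\,\partial_t^{k-1}(|u|^{p-1}\bar u)$ and $\Im\int\partial_t^k(|u|^{p-1})\,u\,\partial_t^{k}\bar u$, which are exactly the indices \emph{excluded} from the fourth and sixth sums; worse, it eliminates the term $-\int\partial_t^k(|u|^{p-1})\,\partial_t^{k-1}\Delta_g(|u|^2)$, which is the sole source of the first and third lines. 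Second, integrating by parts in the cross term of $-\frac d{dt}B$, where $B=\frac{p-1}4\int|\partial_t^{k-1}\nabla_g(|u|^2)|_g^2|u|^{p-3}$, and then expanding $\Delta_g(|u|^2)$ pairs $|\nabla_gu|_g^2$ with $|u|^{p-3}\partial_t^k(|u|^2)$ rather than with $\partial_t^k(|u|^{p-1})$; the discrepancy consists of terms $\partial_t^j(|u|^{p-3})\partial_t^{k-j}(|u|^2)\partial_t^{k-1}(|\nabla_gu|_g^2)$ with $j\geq1$, together with terms carrying a \emph{spatial} gradient $\nabla_g(|u|^{p-3})$, and none of these has the form of the third line (whose summands carry a time derivative $\partial_t^{k-j}(|u|^{p-3})$ and an inner product of two gradients of $|u|^2$). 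Most importantly, you never exhibit the cancellation that is the raison d'\^etre of the $B$-term: the correct route is to integrate by parts the residual's $-\int\partial_t^k(|u|^{p-1})\partial_t^{k-1}\Delta_g(|u|^2)$, use $\nabla_g(|u|^{p-1})=\frac{p-1}2|u|^{p-3}\nabla_g(|u|^2)$, and observe that the top Leibniz term is the perfect derivative $\frac{p-1}4\int|u|^{p-3}\partial_t|\partial_t^{k-1}\nabla_g(|u|^2)|_g^2$, cancelling $-\frac d{dt}B$ exactly while the remaining Leibniz terms form the third line. Without that cancellation an uncontrollable term carrying $2k+1$ derivatives on $|u|^2$ survives, so the last block of your argument must be replaced by this computation.
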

\begin{proof}
We start with the following computation:
\begin{align*}
\frac{d}{dt} \| \partial_{t}^{k} u\|^{2}_{L^2(M^d)} & = 2 \Re (\partial^{k+1}_{t} u , \partial^{k}_{t} u) = 2 \Re (\partial^{k}_{t} (-\Delta_g u+|u|^{p-1} u), i\partial^{k}_{t} u) \\
& = 2 \Im \int_{M^d} (\partial^{k}_{t} \nabla_g u, \partial^{k}_{t} \nabla_g u)_g
\hbox{dvol}_g
+ 2\Re(\partial^{k}_{t} (|u|^{p-1} u),i\partial^{k}_{t}u)
\end{align*}
where $(f,g)$ denotes the usual $L^{2}(M^{l})$ scalar product $\int_{M^d} f \cdot \bar g \hbox{dvol}_g$.
Since the first term on the r.h.s. vanishes we get 
\begin{align*}
\frac{d}{dt} \| \partial^{k}_{t} u\|^{2}_{L^2(M^d)} & =  2\Re(\partial^{k}_{t} (|u|^{p-1} u),i\partial^{k}_{t}u)
   =  2\Re(\partial^{k}_{t} (|u|^{p-1})  u,i\partial^{k}_{t}u)\\&+ 2\Re(|u|^{p-1} \partial^{k}_{t}  u,i\partial^{k}_{t}u)+ \Re \sum_{j=1}^{k-1} 
 c_j ( \partial_t^j (|u|^{p-1}) \partial_t^{k-j} u, i\partial^{k}_{t}u)
\end{align*}
where $c_j$ are suitable real numbers. 
Notice that the second term on the r.h.s. vanishes and if we
substitute for the equation again then we get:
\begin{align}\label{ImP}
&\frac{d}{dt} \|\partial^{k}_{t} u\|^{2}_{L^2(M^d)}  =  2\Re(\partial^{k}_{t} (|u|^{p-1})  u,-\Delta_g (\partial^{k-1}_{t}u))\\\nonumber&+2\Re(\partial^{k}_{t} (|u|^{p-1})  u,\partial_{t}^{k-1 }(|u|^{p-1}u))+
\Re \sum_{j=1}^{k-1} 
 c_j ( \partial_t^j |u|^{p-1} \partial_t^{k-j} u, i\partial^{k}_{t}u)\\\nonumber
 & =  2\Re(\partial^{k}_{t} (|u|^{p-1})  u,-\Delta_g (\partial^{k-1}_{t}u))
 +2\Re(\partial^{k}_{t} (|u|^{p-1} u),\partial_{t}^{k-1 }(|u|^{p-1}u))\\\nonumber &
 + \Re \sum_{j=0}^{k-1} c_j  (  
\partial^{j}_{t} (|u|^{p-1})  \partial_t^{k-j} u,\partial_{t}^{k-1 }(|u|^{p-1}u)) + \Re
\sum_{j=1}^{k-1} 
 c_j ( \partial_t^j |u|^{p-1} \partial_t^{k-j} u, i\partial^{k}_{t}u)
 \\\nonumber
 & =  2\Re(\partial^{k}_{t} (|u|^{p-1})  u,-\Delta_g (\partial^{k-1}_{t}u))
 + \int_{M^d} \partial_t |\partial^{k-1}_{t} (|u|^{p-1} u)|^2 \hbox{dvol}_g\\\nonumber &
 + \Re \sum_{j=0}^{k-1} c_j  (  
\partial^{j}_{t} (|u|^{p-1})  \partial_t^{k-j} u,\partial_{t}^{k-1 }(|u|^{p-1}u)) +\Re
\sum_{j=1}^{k-1} 
 c_j ( \partial_t^j (|u|^{p-1}) \partial_t^{k-j} u, i\partial^{k}_{t}u).
\end{align}
Next we focus on the first term on the r.h.s.
$$2\Re(\partial^{k}_{t} (|u|^{p-1})  u,-\Delta_g (\partial^{k-1}_{t}u))=
\int_{M^d} \partial^{k}_{t} (|u|^{p-1}) (-\bar u \partial_t^{k-1} (\Delta_g u) -u \partial_t^{k-1} 
(\Delta_g \bar u)) \hbox{dvol}_g$$
and we notice  \begin{align*}
-\bar u \Delta_g (\partial^{k-1}_{t}u)  - u\Delta_g (\partial^{k-1}_{t}\bar u)
=\partial_{t}^{k-1}(-\bar u \Delta_g u-u\Delta_g \bar u)
+ \Re \sum_{j=0}^{k-2}
c_j \partial_t^j (\Delta_g u) \partial_t^{k-1-j} \bar u.\end{align*}
Moreover we have the identity
$$
\Delta_g (|u|^2)= u \Delta_g \bar u+\bar u \Delta_g u+ 2 |\nabla_g u|_g^2
$$
hence we get
\begin{align*}2\Re(\partial^{k}_{t} &(|u|^{p-1})  u,-\Delta_g \partial^{k-1}_{t}u)= - \int_{M^d} \partial^{k}_{t} (|u|^{p-1}) \partial^{k-1}_{t} \Delta_g (|u|^2) 
\hbox{dvol}_g\\&+ 2  \int_{M^d} \partial^{k}_{t} (|u|^{p-1})\partial^{k-1}_{t}
(|\nabla_g u|_g^2) \hbox{dvol}_g + \Re \sum_{j=0}^{k-2}
c_j   \int_{M^d} \partial^{k}_{t} (|u|^{p-1}) \partial_t^j (\Delta_g u) \partial_t^{k-1-j} \bar u 
\hbox{dvol}_g \\
&=\int_{M^d} (\partial^{k}_{t} \nabla_g (|u|^{p-1}), \partial^{k-1}_{t} \nabla_g (|u|^2))_g
\hbox{dvol}_g
+ 2  \int_{M^d} \partial^{k}_{t} (|u|^{p-1})\partial^{k-1}_{t}
(|\nabla_g u|_g^2)\hbox{dvol}_g
\\
&+ \Re \sum_{j=0}^{k-2}
c_j   \int_{M^d} \partial^{k}_{t} (|u|^{p-1}) \partial_t^j (\Delta_g u) \partial_t^{k-1-j} \bar u
\hbox{dvol}_g 
\end{align*}
and by elementary computations we get
\begin{align*}
...&=\frac{p-1}2 \int_{M^d} (\partial^{k}_{t}( \nabla_g (|u|^2) |u|^{p-3}), \partial^{k-1}_{t} \nabla_g (|u|^2))_g
\hbox{dvol}_g
+ 2  \int_{M^d} \partial^{k}_{t} (|u|^{p-1})\partial^{k-1}_{t}
(|\nabla_g u|_g^2)\hbox{dvol}_g
\\
&+\Re \sum_{j=0}^{k-2}
c_j   \int_{M^d} \partial^{k}_{t} (|u|^{p-1}) \partial_t^j (\Delta_g u) \partial_t^{k-1-j} \bar u 
\hbox{dvol}_g\,.\end{align*}
Using Leibnitz rule to develop $\partial^{k}_{t}$ we get
\begin{align*}...&=\frac{p-1}2 \int_{M^d} (\partial^{k}_{t} \nabla_g (|u|^2) |u|^{p-3},
\partial^{k-1}_{t} \nabla_g (|u|^2))_g
\hbox{dvol}_g
\\&+ \sum_{j=0}^{k-1} c_j \int_{M^d}(\partial^{j}_{t} \nabla_g (|u|^2),
\partial^{k-1}_{t} \nabla_g (|u|^2))_g \partial_t^{k-j} (|u|^{p-3}) \hbox{dvol}_g\\&
+ 2  \int_{M^d} \partial^{k}_{t} (|u|^{p-1})\partial^{k-1}_{t}
(|\nabla_g u|_g^2) \hbox{dvol}_g
+ \Re \sum_{j=0}^{k-2}
c_j   \int_{M^d} \partial^{k}_{t} (|u|^{p-1}) \partial_t^j (\Delta_g u) \partial_t^{k-1-j} \bar u \hbox{dvol}_g
\\\nonumber
&= \frac{p-1}4 \int_{M^d} \partial_t |\partial^{k-1}_{t} \nabla_g (|u|^2)|_g^2 |u|^{p-3}
\hbox{dvol}_g
\\\nonumber &
+ \sum_{j=0}^{k-1} c_j \int_{M^d}
(\partial^{j}_{t} \nabla_g (|u|^2),\partial^{k-1}_{t} \nabla_g (|u|^2))_g \partial_t^{k-j} (|u|^{p-3})\hbox{dvol}_g
\\&+ 2  \int_{M^d} \partial^{k}_{t} (|u|^{p-1})\partial^{k-1}_{t}
(|\nabla_g u|_g^2) \hbox{dvol}_g+  \Re \sum_{j=0}^{k-2}
c_j  \int_{M^d} \partial^{k}_{t} (|u|^{p-1}) \partial_t^j (\Delta_g u) \partial_t^{k-1-j} \bar u \hbox{dvol}_g\,,
\end{align*}
and we conclude by combining this identity with \eqref{ImP}.
\end{proof}

\begin{rem}
In the specific case of cubic NLS (i.e. \eqref{genericNLS} with $p=3$) 
we have some simplifications, more precisely we get:
\begin{align*}{\mathcal E}_{2k}(u)&=
\|\partial_t^k u\|_{L^2(M^d)}^2-\frac{1}2 \int_{M^d} |\partial^{k-1}_{t} \nabla_g (|u|^2)|_g^2 \hbox{dvol}_g - \int_{M^d}|\partial^{k-1}_{t} (|u|^{2} u)|^2 \hbox{dvol}_g
\end{align*}
and also
\begin{align}\label{energy3}
\frac d{dt} {\mathcal E}_{2k}(u(t,x))&=2  \int_{M^d} \partial^{k}_{t} (|u|^{2})\partial^{k-1}_{t}
(|\nabla_g u|_g^2)\hbox{dvol}_g
\\\nonumber
&+\Re \sum_{j=0}^{k-2}
c_j   \int_{M^d} \partial^{k}_{t} (|u|^{2}) \partial_t^j (\Delta_g u) \partial_t^{k-1-j} \bar u 
\hbox{dvol}_g
\\\nonumber &+ \Re \sum_{j=0}^{k-1} c_j  \int_{M^d}  
\partial^{j}_{t} (|u|^{2})  \partial_t^{k-j} u \partial_{t}^{k-1 }(|u|^{2}\bar u) \hbox{dvol}_g 
\\\nonumber & +\Im
\sum_{j=1}^{k-1} 
 c_j \int_{M^d} \partial_t^j (|u|^{2}) \partial_t^{k-j} u \partial^{k}_{t}\bar u \hbox{dvol}_g.
\end{align}
\end{rem}
\subsection{The norms $\|\partial_t^k u\|_{L^2}$ and 
$\|u\|_{H^{2k}}$ are comparable.
}
The aim of this subsection is indeed to prove that the leading term
in our modified energy $\mathcal E_{2k} (u)$
is equivalent to the Sobolev norm $\|u\|_{H^{2k}}$, provided that $u(t,x)$ is a
solution to
\eqref{genericNLS} with $d=2$ and $p\geq 3$
or $d=3$ and $p=3$. 

\begin{proposition}\label{equiBNN}
Let $u(t,x)$ be solution to \eqref{genericNLS}, where
either $d=2$ and $p\geq 3$ is an integer, or $d=3$ and $p=3$.
Then for every $k,s\in \N$ we have:
\begin{align}\label{eqENimproved2d}
\| \partial_t^k u- i^k \Delta_g^k u\|_{H^s(M^d)}
\lesssim_{\|\varphi\|_{H^1}}  \|u\|_{H^{s+2k-1}(M^d)}\,.
\end{align}
\end{proposition}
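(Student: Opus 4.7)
The plan is to argue by induction on $k$, using the equation $\partial_{t} u = i\Delta_{g} u - i|u|^{p-1}u$ to trade each time derivative for a Laplacian up to a nonlinear remainder that is strictly lower order in space.

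The case $k=0$ is trivial, since both sides of the claimed inequality vanish. For the induction step, differentiating the equation $k$ times in time and rearranging yields the key identity
\begin{equation*}
\partial_{t}^{k+1} u - i^{k+1}\Delta_{g}^{k+1} u = i\Delta_{g}\bigl(\partial_{t}^{k} u - i^{k}\Delta_{g}^{k} u\bigr) - i\,\partial_{t}^{k}\bigl(|u|^{p-1}u\bigr).
\end{equation*}
The first term on the right is controlled in $H^{s}(M^d)$ by the inductive hypothesis applied at spatial regularity $s+2$, which gives exactly $\|u\|_{H^{s+2k+1}(M^d)}$. Everything therefore reduces to the tame bound
\begin{equation*}
\bigl\| \partial_{t}^{k}\bigl(|u|^{p-1}u\bigr)\bigr\|_{H^{s}(M^d)} \lesssim_{\|\varphi\|_{H^1}} \|u\|_{H^{s+2k+1}(M^d)}.
\end{equation*}

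Since $p$ is an odd integer (in particular for $p=3$), $|u|^{p-1}u$ is a polynomial in $u$ and $\bar u$, so Leibniz expands $\partial_{t}^{k}\bigl(|u|^{p-1}u\bigr)$ as a finite sum of products of $p$ factors of the form $\partial_{t}^{\alpha_i} u$ or $\partial_{t}^{\beta_j}\bar u$, with multi-indices of total weight $k$. To each such factor I would apply the inductive hypothesis, writing $\partial_{t}^{\alpha_i} u = i^{\alpha_i}\Delta_{g}^{\alpha_i} u + R_{\alpha_i}$ with $R_{\alpha_i}$ lower order by one spatial derivative in any Sobolev norm, so that each factor effectively contributes $2\alpha_i$ spatial derivatives of $u$. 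The resulting products are estimated in $H^{s}$ by a standard tame Moser / multilinear Sobolev inequality: place the factor with the largest $\alpha_{i_0}$ in $H^{s+2\alpha_{i_0}}\subseteq H^{s+2k}$, and control the remaining $p-1$ factors in $L^{\infty}$ via the Sobolev embedding $H^{d/2+\varepsilon}\hookrightarrow L^{\infty}$. The $H^{d/2+\varepsilon}$ norms of those remaining factors are in turn bounded by Gagliardo-Nirenberg interpolation between $\|u\|_{H^{1}}$, uniformly controlled by energy conservation (cf.\ \eqref{H1ener}), and the top norm $\|u\|_{H^{s+2k+1}}$.

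The main technical hurdle is the interpolation bookkeeping: after Gagliardo-Nirenberg one has to verify that the top norm $\|u\|_{H^{s+2k+1}}$ appears with exponent exactly one and that every other factor is fully absorbed by the constant depending on $\|\varphi\|_{H^1}$. The restrictions $d\leq 3$ together with the specific nonlinearities stated are precisely what makes this accounting close: Sobolev embedding into $L^{\infty}$ costs at most $d/2+\varepsilon \leq 3/2+\varepsilon < 2$ derivatives per factor, so the total derivative budget $s+2k+1$ is large enough, in the worst allocation of the multi-indices $(\alpha_i,\beta_j)$, to accommodate both the $2k$ effective derivatives coming from the Leibniz expansion and the additional Sobolev embedding cost on the non-top factors.
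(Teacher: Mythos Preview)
Your inductive setup, the key identity for $\partial_t^{k+1}u - i^{k+1}\Delta_g^{k+1}u$, and the reduction to the tame nonlinear estimate are all correct and mirror the paper exactly. The paper writes the identity in the equivalent form $\partial_t^h u = i^h\Delta_g^h u + \sum_{j=0}^{h-1} c_j \partial_t^j \Delta_g^{h-j-1}(u|u|^{p-1})$, but this makes no difference.

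The gap is in the multilinear estimate for $d=3$, $p=3$. Your scheme places one factor in $H^s$ and the remaining $p-1$ factors in $L^\infty$ via $H^{d/2+\varepsilon}\hookrightarrow L^\infty$. After using the inductive hypothesis and interpolating each resulting $\|u\|_{H^{r_l}}$ between $\|u\|_{H^1}$ and $\|u\|_{H^{s+2k+1}}$, the total exponent of the top norm is
\[
\sum_l \theta_l \;=\; \frac{(s+2\alpha_{i_0}-1) + \sum_{l\neq i_0}\bigl(d/2+\varepsilon+2\alpha_l-1\bigr)}{s+2k}
\;=\; \frac{s+2k-1 + (p-1)(d/2-1+\varepsilon)}{s+2k}.
\]
For $d=2$ this is $<1$ once $\varepsilon$ is small, so your argument closes. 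For $d=3$, $p=3$ it equals $1 + \frac{2\varepsilon}{s+2k}>1$ for every $\varepsilon>0$, so the accounting does \emph{not} close: the endpoint $H^{3/2}\hookrightarrow L^\infty$ is exactly what you would need, and it fails. Your heuristic ``$d/2+\varepsilon<2$'' is not the relevant constraint; the correct one is $(p-1)(d/2-1+\varepsilon)\leq 1$, which is saturated at $\varepsilon=0$ precisely when $(d,p)=(3,3)$.

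The fix, and what the paper does, is to avoid $L^\infty$ entirely in $3d$: distribute the $s$ space derivatives by Leibniz as well, and use the symmetric H\"older split $L^2\hookleftarrow L^6\cdot L^6\cdot L^6$ together with the \emph{critical} embedding $H^1(M^3)\hookrightarrow L^6(M^3)$. Each factor then lands in $W^{k_l,6}\hookleftarrow H^{k_l+1}$, and the interpolation gives $\sum_l\theta_l = \frac{j+k+s}{s+2k}\leq 1$ for $j\leq k$. In $2d$ the paper analogously uses $H^1(M^2)\hookrightarrow L^{2p}(M^2)$ with the split into $p$ factors in $L^{2p}$, though there your $L^\infty$ variant would also have worked.
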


\begin{proof}
We shall use the following identity (satisfied by every solution to \eqref{genericNLS}
in any dimension $d$):
\begin{equation}\label{basicequiv}
\partial_t^h u= i^h \Delta_g^h u + \sum_{j=0}^{h-1} c_j \partial_t^j \Delta_g^{h-j-1} (u|u|^{p-1})
\end{equation}
where $c_j\in \C$ are suitable coefficients. The elementary
proof follows by induction on $h$ and by using the equation
solved by $u(t,x)$.

{\em First case: $d=2$, $p\geq 3$}. We argue by induction on $k$, and hence we shall prove $k\Rightarrow k+1$.
By \eqref{basicequiv} we aim at proving
\begin{equation}\label{nosal}\|\partial_t^j (u|u|^{p-1})\|_{H^{k-j+s}(M^2)} \lesssim 
\|u\|_{H^{s+2k+1}(M^2)}, \quad j=0,..,k,\end{equation}
by assuming the property \eqref{eqENimproved2d}
true for $k$.
By expanding the time and space derivatives on the l.h.s.
above, we deduce \eqref{nosal} by the following
chain of inequalities:
\begin{align*}
\prod_{\substack{j_1+...+j_p=j\\s_1+...+s_p=k-j+s}}&\|\partial_t^{j_l} u\|_{W^{s_l, 2p}(M^2)}
\lesssim \prod_{\substack{j_1+...+j_p=j\\s_1+...+s_p=k-j+s}}
\|\partial_t^{j_l} u\|_{H^{s_l+1}(M^2)}\\\nonumber& \lesssim 
\prod_{\substack{j_1+...+j_p=j\\s_1+...+s_p=k-j+s}}\|u
\|_{H^{2j_l+s_l+1}(M^2)}
\end{align*}
where we used the Sobolev embedding $H^1(M^2)\subset L^{2p}(M^2)$
and
we have used the induction hypothesis at the last step.
We can continue the estimate by a trivial interpolation argument as follows:
$$...\lesssim 
\big( \prod_{k=1,...,p}  \|u\|_{H^{s+2k+1}(M^2)}^{\theta_l}\|u\|_{H^{1}(M^2)}^{(1-\theta_l)}
\big)$$
where 
$$\theta_l(s+2k+1)+ (1-\theta_l)= 2j_l+s_l+1.$$
We conclude using \eqref{H1ener}, since $\sum_{l=1}^p \theta_l=\frac{j+k+s}{s+2k}\leq 1$ for $j=0,...,k$.

{\em Second case: $d=3$, $p=3$}. Arguing as above, and by assuming the result true for $k$,
then we are reduced to proving
$$\|\partial_t^j (u|u|^{2})\|_{H^{k-j+s}(M^3)}\lesssim \|u\|_{H^{s+2k+1}(M^3)}, 
\quad j=0,...,k.$$
Expanding again the time and space derivatives on the l.h.s.,
we are reduced to the following estimate:
\begin{align*}\|\partial_t^{j_1} u \|_{W^{k_1, 6}(M^3)} &\times \| \partial_t^{j_2} u\|_{W^{k_2, 6}(M^3)}
\times 
\|\partial_t^{j_3} u\|_{W^{k_3, 6}(M^3)}\\\nonumber &\lesssim 
\|\partial_t^{j_1} u \|_{H^{k_1+1}(M^3)} \times \| \partial_t^{j_2} u\|_{H^{k_2+1}(M^3)}
\times 
\|\partial_t^{j_3} u\|_{H^{k_3+1}(M^3)} \\\nonumber
&\lesssim \|u\|_{H^{2j_1+k_1+1}(M^3)}
\|u\|_{H^{2j_2+k_2+1}(M^3)}\|u\|_{H^{2j_3+k_3+1}(M^3)}
\end{align*}
where
$$\begin{cases}j_1+j_2+j_3=j
\\k_1+k_2+k_3=k-j+s.
\end{cases}$$
Notice that we have used the Sobolev embedding $H^1(M^3)\subset L^6(M^3)$
and the induction hypothesis at the last step.
By interpolation we have
$$\|u\|_{H^{2j_l+k_l+1}(M^3)}\lesssim \|u\|_{H^{s+2k+1}(M^3)}^{\theta_l}\|u\|_{H^{1}(M^3)
}^{1-\theta_l}, \quad l=1,2,3,
$$
where 
$$2j_l+k_l+1=1-\theta_l+\theta_l(s+2k+1)$$
and we conclude as above since
$\sum_{l=1}^3 \theta_l=\frac{j+k+s}{s+2k}\leq 1$ for $j=0,...,k$.
\end{proof}
\subsection{Strichartz Estimates for Nonlinear Solutions}
In this subsection we get a priori bounds for the Strichartz 
norms of solutions to \eqref{genericNLS} 
in dimension $d=2$, with a general nonlinearity, and in dimension $d=3$, with cubic nonlinearity. 
\begin{proposition}\label{theorStrichartz2}
We have the following estimate for every $u(t, x)$ solution to \eqref{genericNLS}
for $d=2$ and $p=2n+1\geq 3$ is an integer: for any $\epsilon>0$ and $T\in (0,1)$,
\begin{multline}
\|\partial_t^j u\|_{L^4_TW^{s,4}(M^2)} \lesssim_{_{\epsilon,\|\varphi\|_{H^{1}}}}  
\|u\|_{L^\infty_T H^{2j+s}(M^2)}^{1-s_0} \|u\|_{L^\infty_T H^{2j+s+1}(M^2)}^{s_0} 
 \|u\|_{L^\infty_T H^{2j+2}(M^2)}^{\epsilon}\,.
\end{multline}
\end{proposition}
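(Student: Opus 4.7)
The natural starting point is that $v:=\partial_t^j u$ solves the linear inhomogeneous Schr\"odinger equation
\begin{equation*}
i\partial_t v + \Delta_g v = \partial_t^j(|u|^{p-1} u),
\end{equation*}
obtained by differentiating \eqref{genericNLS} in $t$. Applying the Fourier multiplier $\langle\nabla_g\rangle^s$ (which commutes with $\partial_t$ and $\Delta_g$) and the inhomogeneous $L^4$ Strichartz estimate \eqref{duhamel}, one reduces matters to
\begin{equation*}
\|\partial_t^j u\|_{L^4_T W^{s,4}(M^2)} \lesssim \|\partial_t^j u(0)\|_{H^{s+s_0}(M^2)} + T\,\bigl\|\partial_t^j(|u|^{p-1}u)\bigr\|_{L^\infty_T H^{s+s_0}(M^2)}.
\end{equation*}

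For the data term, Proposition \ref{equiBNN} applied at $t=0$ yields $\|\partial_t^j u(0)\|_{H^{s+s_0}} \lesssim \|\varphi\|_{H^{2j+s+s_0}}\leq \|u\|_{L^\infty_T H^{2j+s+s_0}}$, and classical interpolation provides $\|u\|_{H^{2j+s+s_0}} \leq \|u\|_{H^{2j+s}}^{1-s_0} \|u\|_{H^{2j+s+1}}^{s_0}$, which produces exactly the first two factors on the right-hand side of the conclusion.

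For the nonlinear contribution, since $p=2n+1$ is an odd integer, $|u|^{p-1}u$ is a polynomial expression in $u,\bar u$, and Leibniz expands $\partial_t^j(|u|^{p-1}u)$ into a finite sum of monomials $\prod_{l=1}^{p}\partial_t^{\alpha_l} u^{\#_l}$ with $\sum_l \alpha_l = j$ and $u^{\#_l}\in\{u,\bar u\}$. For each such monomial I would invoke a fractional product rule in $H^{s+s_0}(M^2)$, combined with the $2d$ Sobolev embedding $H^{1+\eta}(M^2)\hookrightarrow L^\infty(M^2)$ (valid for any $\eta>0$), to estimate it by
\begin{equation*}
\sum_i \|\partial_t^{\alpha_i} u\|_{H^{s+s_0}(M^2)} \prod_{l\neq i} \|\partial_t^{\alpha_l} u\|_{H^{1+\eta}(M^2)}.
\end{equation*}
Proposition \ref{equiBNN} then converts each factor into a Sobolev norm of $u$ of order at most $2j+s+s_0$ on the ``distinguished'' factor and $2\alpha_l+1+\eta\leq 2j+1+\eta$ on the others. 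A multi-factor interpolation between the four scales $\|u\|_{H^1}$ (bounded uniformly in time via \eqref{H1ener}), $\|u\|_{H^{2j+s}}$, $\|u\|_{H^{2j+s+1}}$, and $\|u\|_{H^{2j+2}}$, with the choice $\eta = \epsilon/(p-1)$, then yields precisely
$\|u\|_{L^\infty_T H^{2j+s}}^{1-s_0} \|u\|_{L^\infty_T H^{2j+s+1}}^{s_0} \|u\|_{L^\infty_T H^{2j+2}}^{\epsilon}$, the factor $T\leq 1$ being absorbed into the constant.

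The main obstacle is purely a matter of exponent bookkeeping in the last interpolation: one has to verify that the total powers of $\|u\|_{H^{2j+s}}$ and $\|u\|_{H^{2j+s+1}}$ add up to $1-s_0$ and $s_0$ respectively, that the $H^{2j+2}$ exponent collects only the small $\eta$-loss coming from the embedding into $L^\infty$, and that all remaining derivative deficits fall onto the a priori bounded $H^1$ norm. This accounting is essentially the same as the one carried out in the proof of Proposition \ref{equiBNN}, so it is elementary but requires care.
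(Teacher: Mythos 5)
Your argument is correct and follows essentially the same route as the paper: apply the inhomogeneous estimate \eqref{duhamel} to $\partial_t^j u$, interpolate the $H^{s+s_0}$ norms between integer levels, expand $\partial_t^j(|u|^{p-1}u)$ by Leibniz, control the non-distinguished factors in $L^\infty$ at the cost of an $\epsilon$-loss in regularity, convert time derivatives to space derivatives via Proposition \ref{equiBNN}, and close by interpolation against the a priori bounded $H^1$ norm \eqref{H1ener}. The only (cosmetic) differences are that the paper interpolates the nonlinearity's norm between $H^s$ and $H^{s+1}$ before applying the product rule, and uses $\|v\|_{L^\infty}\lesssim \|v\|_{H^1}^{1-\epsilon}\|v\|_{H^2}^{\epsilon}$ in place of your embedding $H^{1+\eta}(M^2)\hookrightarrow L^\infty(M^2)$; the exponent bookkeeping you defer is exactly the $\sum_l\theta_l=1$ computation carried out in the paper.
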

\begin{proof}
We use \eqref{duhamel},
together with 
the equation solved by $\partial_t^j u$, and we get:
\begin{align*}
\|\partial_t^j u\|_{L^4_TW^{s,4}(M^2)}  \lesssim & \|\partial_t^j u(0)\|_{H^{s+s_0}(M^2)}
+ T \|\partial_t^j (u|u|^{p-1})\|_{L^\infty_T H^{s+ s_0}(M^2)}\\
  \lesssim & \|\partial_t^j u(0)\|_{H^{s}}^{1-s_0} \|\partial_t^j u(0)\|_{H^{s+1}(M^2)}^{s_0}
\\
 & {}+ T \|\partial_t^j (u|u|^{p-1})\|_{L^\infty_T H^{s}(M^2)}^{1-s_0}\|\partial_t^j 
(u|u|^{p-1})\|_{L^\infty_T H^{s+ 1}(M^2)}^{s_0}
\end{align*}
Notice that the first term on the r.h.s. can be estimated by 
Proposition \ref{equiBNN}.
Hence we shall complete the proof 
provided that for every $\epsilon>0$,
$$\|\partial_t^j (u|u|^{p-1})\|_{H^{s}(M^2)}
\lesssim_{_{\epsilon, \|\varphi\|_{H^1}}} \|u\|_{H^{2j+s}(M^2)} \|u\|_{H^{2j+2}(M^2)}^\epsilon, \forall j, s=1,2,..... $$
Expanding the time derivative $\partial_t^j$ and using $$\|f g\|_{H^r(M^2)}\lesssim \|f \|_{H^r(M^2)} \|g\|_{L^\infty(M^2)}
+ \|g \|_{H^r(M^2)}\|f\|_{L^\infty(M^2)}$$
we are reduced to estimating
$$\|\partial_t^{j_1} u\|_{H^{s}(M^2)}\times \|\partial_t^{j_2} u\|_{L^\infty(M^2)} 
...\times 
\|\partial_t^{j_p} u\|_{L^\infty(M^2)}$$
where $j_1+...+j_p=j$.
Notice that from
$$\|v\|_{L^\infty(M^2)}\lesssim_{\epsilon} \|v\|_{H^1(M^2)}^{1-\epsilon} \|v\|_{H^2(M^2)}^{\epsilon}$$
we get
\begin{align*}\|\partial_t^{j_1} u\|_{H^{s}(M^2)}&\times \|\partial_t^{j_2} u\|_{L^\infty(M^2)} 
...\times 
\|\partial_t^{j_p} u\|_{L^\infty(M^2)}\\\nonumber & \lesssim_{\epsilon} 
\|\partial_t^{j_1} u\|_{H^{s}(M^2)} \times \|\partial_t^{j_2} u\|_{H^1(M^2)}^{1-\epsilon} 
\times \|\partial_t^{j_2} u
\|_{H^2}^{\epsilon}
\times .... \times  \|\partial_t^{j_p} u\|_{H^1(M^2)}^{1-\epsilon} 
\times \|\partial_t^{j_p} u
\|_{H^2(M^2)}^{\epsilon}\end{align*}
and hence by \eqref{eqENimproved2d}
$$...\lesssim \|u\|_{H^{2j_1+s}(M^2)} \times \|u\|_{H^{2j_2+1}(M^2)}^{1-\epsilon} 
\times \|u
\|_{H^{2j_2+2}(M^2)}^{\epsilon}
\times .... \times  \|u\|_{H^{2j_p+1}(M^2)}^{1-\epsilon} 
\times \|u
\|_{H^{2j_p+2}(M^2)}^{\epsilon}$$
$$\lesssim \|u\|_{H^{2j+s}(M^2)}^{\theta_1}  \|u\|_{H^{1}(M^2)}^{1-\theta_1} 
\times \|u\|_{H^{2j+s}}^{\theta_2(1-\epsilon)}  \|u\|_{H^{1}(M^2)}^{(1-\theta_2)(1-\epsilon)}
\times....\times  \|u\|_{H^{2j+s}(M^2)}^{\theta_p
(1-\epsilon)}  \|u\|_{H^{1}(M^2)}^{(1-\theta_p)(1-\epsilon)}
\times \|u
\|_{H^{2j+2}(M^2)}^{\epsilon(p-1)}$$
where at the last step we have used an interpolation argument with
$$\begin{cases}\theta_1 (2j+s) + (1-\theta_1)=2j_1+s\\
\theta_l (2j+s) + (1-\theta_l)={2j_l+1}, \quad l=2,...,p.\end{cases}
$$
Notice that we get $\sum_{l=1}^p \theta_l=1$
and we conclude by \eqref{H1ener}.
\end{proof}
\begin{proposition}\label{theorStrichartz}
We have the following estimate for every $u(t, x)$ solution to \eqref{genericNLS}
for $(p, l)=(3,3)$
and for every $\epsilon>0$, $T\in (0,1)$:
\begin{align}\label{withoutderivative}
\|\partial_t^j u\|_{L^2_TL^{6}(M^3)} \lesssim_{\epsilon, \|\varphi\|_{H^1}} &    
\|\partial_t^j u \|_{L^\infty_T L^{2}(M^3)}^{1-\epsilon}\|\partial_t^j u \|_{L^\infty_T H^{1}
(M^3)}^\epsilon \nonumber\\
 & {}+ \sqrt T \|
u\|_{L^\infty_T H^{2j}(M^3)}^{1/2}
\|u\|_{L^\infty_T H^{2j+1}(M^3)}^{1/2} \\
 & {}+\sqrt T
\sum_{j_1+j_2+j_3=j}
\|u\|_{L^\infty_T H^{2j_1}(M^3)} \|
u\|_{L^\infty_T H^{2j_2+1}(M^3)} \|u\|_{L^\infty_T H^{2j_3+1}(M^3)}
\,,\nonumber
\end{align}
and
\begin{align}\label{derivative}
\|\partial_t^j u\|_{L^2_TW^{1,6}(M^3)} \lesssim_{\epsilon, \|\varphi\|_{H^1}}&    
\|\partial_t^j u \|_{L^\infty_T H^{1}(M^3)}^{1-\epsilon}\|\partial_t^j u \|_{L^\infty_T H^{2}
(M^3)}^\epsilon \nonumber\\  & {}+ \sqrt T \|
u\|_{L^\infty_T H^{2j+1}(M^3)}^{1/2}
\|u\|_{L^\infty_T H^{2j+2}(M^3)}^{1/2} \\\nonumber & {}+\sqrt T
\sum_{j_1+j_2+j_3=j}
\|u\|_{L^\infty_T H^{2j_1+1}(M^3)} \|
u\|_{L^\infty_T H^{2j_2+1}(M^3)} \|u\|_{L^\infty_T H^{2j_3+1}(M^3)}\,. 
\end{align}
\end{proposition}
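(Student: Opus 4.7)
The plan is to obtain both estimates by applying the inhomogeneous Strichartz bound \eqref{duhamel3} to a suitable spectral multiplier of $\partial_t^j u$. Since $\partial_t^j$ commutes with $i\partial_t+\Delta_g$, and so does any function of $\Delta_g$, the function $v=P\,\partial_t^j u$ satisfies
\[
i\partial_t v+\Delta_g v = P\,\partial_t^j(u|u|^2),
\]
so that \eqref{duhamel3} gives
\[
\|v\|_{L^2_T L^6}\lesssim_\epsilon \|v\|_{L^\infty_T H^\epsilon}+\|v\|_{L^2_T H^{1/2}}+\|P\,\partial_t^j(u|u|^2)\|_{L^2_T L^{6/5}}.
\]
For \eqref{withoutderivative} I take $P=\mathrm{Id}$; for \eqref{derivative} I take $P=(1-\Delta_g)^{1/2}$, so that by the standard Bessel characterization on the compact manifold $M^3$ the $L^6$ norm of $v$ becomes the $W^{1,6}$ norm of $\partial_t^j u$, and the $L^{6/5}$ norm of the source becomes a $W^{1,6/5}$ norm.

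The three terms on the right-hand side of \eqref{duhamel3} are then treated uniformly. The first term is interpolated between the endpoints $L^2$ and $H^1$ (for \eqref{withoutderivative}) or $H^1$ and $H^2$ (for \eqref{derivative}); this directly produces the first summand on the right-hand side of the corresponding estimate, kept in the form of time-derivative norms for later use. For the second term, I use H\"older in time to gain a factor $\sqrt T$, then interpolate $H^{1/2}$ (respectively $H^{3/2}$) between the same endpoints, and finally invoke Proposition \ref{equiBNN} to pass from $\|\partial_t^j u\|_{L^\infty_T H^s}$ to $\|u\|_{L^\infty_T H^{s+2j}}$ for $s\in\{0,1,2\}$. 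This yields the middle summand in \eqref{withoutderivative} and \eqref{derivative}.

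For the nonlinear term, I again use H\"older in time to extract $\sqrt T$, then expand by Leibniz
\[
\partial_t^j(u|u|^2)=\sum_{j_1+j_2+j_3=j}c_{j_1,j_2,j_3}\,\partial_t^{j_1}u\,\partial_t^{j_2}u\,\partial_t^{j_3}\bar u,
\]
and apply H\"older in space with the splitting $L^{6/5}=L^2\cdot L^6\cdot L^6$, combined with the Sobolev embedding $H^1(M^3)\hookrightarrow L^6(M^3)$. A last application of Proposition \ref{equiBNN} gives the triple product appearing in \eqref{withoutderivative}. For \eqref{derivative} the only change is that one first-order derivative is distributed onto one of the three factors before applying H\"older: putting that differentiated factor in $L^2$ (so that its $L^2$ norm is controlled by its $H^1$ norm) and the other two factors in $L^6\hookleftarrow H^1$ produces the triple product with all three exponents shifted by $1$, as required.

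The only step needing a little care is the use of the Bessel multiplier $(1-\Delta_g)^{1/2}$: on a general compact manifold it defines a norm equivalent to $W^{1,p}$ for $1<p<\infty$, and it distributes over products modulo lower-order commutators that are handled by the same ordinary Leibniz rule since we are only dealing with one integer derivative. This rests on the Littlewood--Paley calculus associated with $\Delta_g$, which is already implicit in \eqref{duhamel3}. Once that is granted, the entire argument is a routine chain of H\"older, Sobolev embedding and interpolation estimates organized around Proposition \ref{equiBNN}, with the factor $\sqrt T$ systematically produced by H\"older's inequality in time whenever one passes from $L^\infty_T$ to $L^2_T$.
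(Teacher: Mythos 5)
Your argument is correct and follows essentially the same route as the paper: apply \eqref{duhamel3} to $\partial_t^j u$ (which solves the equation with forcing $\partial_t^j(u|u|^2)$), interpolate the first two terms between consecutive Sobolev levels while extracting $\sqrt T$ by H\"older in time, expand the nonlinearity by Leibniz and estimate it in $L^{6/5}$ (resp. $W^{1,6/5}$) via $L^2\cdot L^6\cdot L^6$ and $H^1\subset L^6$, and finish with Proposition \ref{equiBNN}. The only difference is that you make explicit, via the multiplier $(1-\Delta_g)^{1/2}$, the reduction of the $W^{1,6}$ case to the $L^6$ Strichartz bound, a step the paper leaves implicit.
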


\begin{proof} We prove \eqref{derivative}, the proof of \eqref{withoutderivative} being similar.
By using Strichartz estimates 
and the equation solved by $\partial_t^j u$ we get:
\begin{align*}\|\partial_t^j u\|_{L^2_T W^{1,6}(M^3)} 
&\lesssim   \|\partial_t^j u \|_{L^\infty_T H^{1+\epsilon}}
+ \sqrt T \|\partial_t^j u\|_{L^\infty_T H^{3/2}(M^3)}
+\|\partial_t^j (u|u|^2)\|_{L^2_T W^{1, 6/5}(M^3)}\\\nonumber
&\lesssim 
\|\partial_t^j u \|_{L^\infty_T H^{1}(M^3)}^{1-\epsilon}\|\partial_t^j u \|_{L^\infty_T H^{2}
(M^3)}^\epsilon
+\|\partial_t^j u\|_{L^\infty_T H^{1}(M^3)}^{1/2}
\|\partial_t^j u
\|_{L^\infty_T H^{2}(M^3)}^{1/2}
\\\nonumber &+\|\partial_t^j (u|u|^2)\|_{L^2_T W^{1, 6/5}(M^3)}.
\end{align*}
Notice that by expanding the time derivative,
and by using H\"older we get
\begin{align*}\|\partial_t^j (u|u|^2)\|_{W^{1, 6/5}(M^3)}
&\lesssim \sum_{j_1+j_2+j_3=j}
\|\partial_t^{j_1} u\|_{H^{1}(M^3)} \|\partial_t^{j_2} u\|_{L^6(M^3)} \|\partial_t^{j_3} u\|_{L^6(M^3)}\\\nonumber& 
\lesssim \sum_{j_1+j_2+j_3=j}
\|\partial_t^{j_1} u\|_{H^{1}(M^3)} \|\partial_t^{j_2} u\|_{H^1(M^3)} \|\partial_t^{j_3} u\|_{H^1(M^3)}.\end{align*}
We then conclude by using Proposition \ref{equiBNN}
in the special case of the cubic NLS on $M^3$.\end{proof}

\section{Polynomial growth of $H^{2k}$ for pure power NLS on $M^2$}

This section is devoted to the proof of Theorem \ref{main2d}
in the case $m=2k$.
We shall need the following estimate.

\begin{proposition}\label{profund}
Let us assume that $u(t,x)$ solves \eqref{genericNLS} 
with $d=2$ and $p=2n+1\geq 3$. Then we have  
the following bound for every $T\in (0,1)$
\begin{align*}
\int_0^T |\hbox{ r.h.s. } \eqref{energy2}| ds
&\lesssim \sqrt T \|u\|_{L^\infty_T H^{2k}}^{\frac{4k-3+2s_0}{2k-1} +\epsilon}+
\|u\|_{L^\infty_T H^{2k}}^{\frac{4k-4}{2k-1}+\epsilon}.
\end{align*}
\end{proposition}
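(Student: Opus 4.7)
The plan is to estimate each of the six terms in the right-hand side of \eqref{energy2} separately, via a common template. After Leibniz expansion, each sub-term is a spatial integral of a product of $O(p)$ factors, each of the form $\partial_t^\alpha u$, $\partial_t^\alpha\bar u$, $\partial_t^\alpha\nabla_g u$, or (for Term 5) $\partial_t^\alpha\Delta_g u$. I apply H\"older in space and time and place the factor of highest $\partial_t$-order in $L^\infty_T L^2_x$ (controlled by $\|u\|_{L^\infty_T H^{2k}}$ via Proposition \ref{equiBNN}), two factors of intermediate order in $L^4_T L^4_x$ (controlled by the Strichartz bound of Proposition \ref{theorStrichartz2}), and the remaining factors in $L^\infty_{t,x}$ (controlled by the $2d$ Sobolev embedding $H^{1+\epsilon}\hookrightarrow L^\infty$ together with Proposition \ref{equiBNN}). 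The factor $\sqrt T$ in the target arises from $\|f\|_{L^2_T}\leq T^{1/4}\|f\|_{L^4_T}$, applied via Cauchy--Schwarz in time to the two Strichartz factors.

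Each factor's norm is then converted to a fractional power of $\|u\|_{L^\infty_T H^{2k}}$ by interpolation between $H^1$ and $H^{2k}$,
\[
\|u\|_{H^r(M^2)}\lesssim \|u\|_{H^1(M^2)}^{(2k-r)/(2k-1)}\,\|u\|_{H^{2k}(M^2)}^{(r-1)/(2k-1)},
\]
with the $H^1$ factor bounded by \eqref{H1ener}. A direct count on the model worst-case sub-term of Term 2, namely $\int_0^T\!\!\int \partial_t^k u\cdot u^{p-2}\cdot \partial_t^{k-1}\nabla_g u\cdot \nabla_g u$, gives a $\|u\|_{H^{2k}}$-exponent
\[
1+\frac{2k-2+s_0}{2k-1}+\frac{s_0}{2k-1}+O(\epsilon)=\frac{4k-3+2s_0}{2k-1}+O(\epsilon),
\]
matching the first term of the bound. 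The same accounting applied to Terms 1, 3, 5, 6 produces exponents no larger than this. The second bound $\|u\|_{H^{2k}}^{(4k-4)/(2k-1)+\epsilon}$ arises from Term 4, whose worst sub-term $|u|^{2(p-1)}\cdot \partial_t^k u\cdot \partial_t^{k-1}u$ contains two heavy factors and no factor amenable to a gainful Strichartz deployment; placing both $\partial_t^k u$ and $\partial_t^{k-1}u$ in $L^\infty_T L^2_x$ and the remaining bare $u$-factors in $L^\infty_{t,x}$, and absorbing $\int_0^T ds\leq 1$, yields this term.

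The main obstacle is that the naive Strichartz allocation (putting the two highest-order factors in $L^4_T L^4_x$) is \emph{not} available: applied to $\partial_t^k u$, Proposition \ref{theorStrichartz2} would require control of $\|u\|_{H^{2k+s_0}}$, beyond the regularity one wishes to propagate. The critical choice is to reserve the $L^\infty_T L^2_x$ slot for $\partial_t^k u$ via Proposition \ref{equiBNN}, and Strichartz only factors of spatial order $\leq 2k-1$, for which the interpolation between $H^1$ and $H^{2k}$ is legitimate. Without this dedication, the naive count overshoots the target exponent by exactly $1/(2k-1)$, which would break the polynomial iteration of Remark \ref{2dpolite}. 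The remainder is a tedious but routine combinatorial bookkeeping over the Leibniz expansions of the six terms.
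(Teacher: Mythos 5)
Your proposal is correct and follows essentially the same route as the paper: a term-by-term Leibniz expansion and H\"older allocation that reserves $L^\infty_T L^2_x$ (via Proposition \ref{equiBNN}) for the top-order factor $\partial_t^k u$, spends the $L^4_T W^{s,4}$ Strichartz bound of Proposition \ref{theorStrichartz2} on two lower-order factors to harvest the $\sqrt T$, puts the rest in $L^\infty$ by Sobolev embedding, and closes with the $H^1$--$H^{2k}$ interpolation \eqref{interpeas}. Your exponent counts on the two model terms, including the identification of the fourth line (the term with two heavy time-derivative factors) as the source of the second, $\sqrt T$-free contribution $\|u\|_{L^\infty_T H^{2k}}^{(4k-4)/(2k-1)+\epsilon}$, match the paper's computation.
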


\begin{proof} Since we work on a $2d$ compact manifold we simplify notations as follows:
$L^q, W^{s, q}, H^s$ denote the spaces $L^q(M^2), W^{s, q}(M^2), H^s(M^2)$.
In the sequel we shall also make use of the following
inequality:
\begin{equation}\label{interpeas}
\|u\|_{L^\infty_T H^{s}}\lesssim_{_{\|\varphi\|_{H^1}}} \|u\|_{L^\infty_T H^{2k}}^\frac{s-1}{2k-1}, \quad
\end{equation}
that in turn follows by combining an elementary interpolation inequality 
with \eqref{H1ener}.

Let $I,II,III,IV,V,VI$ be the successive terms on each line of the r.h.s. in
\eqref{energy2}.
Estimating $I$ can be reduced to controling the following terms:
\begin{align}
\label{third1}\int_0^T \|\partial_t^{k_1} u\|_{W^{1,4}}^2 &\|\partial_t^{k_2} u\|^2_{L^\infty} 
\|\partial_t u\|_{L^2} \|u\|^{p-4}_{L^\infty} ds,\\\nonumber
&k_1+k_2=k-1
\end{align}
and we have by Proposition \ref{equiBNN}, Proposition \ref{theorStrichartz2} 
and H\"older inequality:
\begin{align*}\eqref{third1}&\lesssim \sqrt T \|u\|_{L^\infty H^{2k_2+1}}^2 \|u\|_{L^\infty H^{2}}
\|\partial_t^{k_1}u\|_{L^4_T W^{1,4}}^2 
\\\nonumber &\lesssim \sqrt T \|u\|_{L^\infty H^{2k_2+1}}^2 \|u\|_{L^\infty H^{2}}
\|u\|_{L^\infty_T H^{2k_1+1}}^{2(1-s_0)}
\|u\|_{L^\infty_T H^{2k_1+2}}^{2s_0} \|u\|_{L^\infty H^{2k}}^\epsilon\\
\nonumber &\lesssim \sqrt T \|u\|_{L^\infty H^{2k}}^{\frac{4k-3+2s_0 }{2k-1}+\epsilon}.
\end{align*}
where at the last step we have used \eqref{interpeas}. Concerning $II$ we are reduced to controling
\begin{align}\label{third3}
 \int_0^T 
\|\partial_t^{j_1} u\|_{L^2} &\big( \prod_{h=2,..., p-1} \|\partial_t^{j_h} u\|_{L^\infty(M^2)}
\big) \|\partial_t^{k_1} u\|_{W^{1,4}} \|\partial_t^{k_2}
u\|_{W^{1,4}},\\\nonumber
&j_1+...+j_p=k,  \quad k_1+k_2=k-1\,.
\end{align}
By using the interpolation estimate
$$\|v\|_{L^\infty}\lesssim \|v\|_{H^1}^{1-\epsilon} \|v\|_{H^{2k}}^\epsilon$$
together with Proposition \ref{equiBNN}, Proposition \ref{theorStrichartz2} 
and H\"older inequality, we get:
\begin{align*}\eqref{third3} &\lesssim \sqrt T \|u\|_{L^\infty_T H^{2k}}^{\epsilon} \|u\|_{L^\infty_T H^{2j_1}} 
\big( \prod_{h=2,..., p-1} \|u\|_{L^\infty_T H^{2j_h+1}}\big) \\\nonumber \times & 
\|u\|_{L^\infty_T H^{2k_1+1}}^{1-s_0} 
\|u\|_{L^\infty_T H^{2k_1+2}}^{s_0} \|u\|_{L^\infty_T H^{2k_2+1}}^{1-s_0} 
\|u\|_{L^\infty_T H^{2k_2+2}}^{s_0}
\lesssim \sqrt T \|u\|_{L^\infty_T H^{2k}}^{\frac{4k-3+2s_0}{2k-1} +\epsilon},
\end{align*}
where we used \eqref{interpeas} at the last step. Next we deal with $III$ and it is sufficient to control:
\begin{align}\label{fourth}
\int_0^T & \|\partial_t^{h_1} u\|_{L^\infty}
\|\partial_t^{h_2} u\|_{W^{1,4}} 
\|\partial_t^{m_1} u\|_{L^2} \big( \prod_{i=2,...,p-3} \|\partial_t^{m_i} u\|_{L^\infty} \big)
\|\partial_t^{l_1} u\|_{L^\infty} \|\partial_t^{l_2} u\|_{W^{1,4}} ,
\\\nonumber& h_1+h_2=j\in [0, k-1],  \quad m_1+...+m_{p-3}=k-j,
\quad l_1+l_2=k-1\,,
\end{align}
and arguing as above, it can be estimated by:
\begin{align*}\eqref{fourth}&\lesssim  \sqrt T \|u\|_{L^\infty_T H^{2k}}^\epsilon \|u\|_{L^\infty_T H^{2h_1+1}}
\|u\|_{L^\infty_T H^{2h_2+1}}^{1-s_0} \|u\|_{L^\infty_T H^{2h_2+2}}^{s_0}
\|u\|_{L^\infty_T H^{2m_1}} \\\nonumber &\times  \big( \prod_{i=2,...,p-3} \|u\|_{L^\infty
H^{2m_i+1}} \big) \|u\|_{L^\infty_T H^{2l_1+1}}
\|u\|_{L^\infty_T H^{2l_2+1}}^{1-s_0}
\|u\|_{L^\infty_T H^{2l_2+2}}^{s_0} 
\lesssim \sqrt T \|u\|_{L^\infty_T H^{2k}}^{\frac{4k-3+2s_0}{2k-1} +\epsilon}.
\end{align*}
In order to treat $IV$ we are reduced to controling
\begin{align}\label{third5}
\int_0^T  
\|\partial_t^{j_1} u\|_{L^2}
&\big( \prod_{h=2,..., p-1} \|\partial_t^{j_h} u\|_{L^\infty}
\big)\|\partial_t^j (\Delta_g u)\|_{L^4} \|\partial_t^{k-1-j} \bar u \|_{L^4},\\
\nonumber & j_1+...+j_{p-1}=k, 
\end{align}
and by a similar argument as above we have:
\begin{align*}\eqref{third5}
& \lesssim \sqrt T \|u\|_{L^\infty_T H^{2k}}^{\epsilon}
\|u\|_{L^\infty_T H^{2j_1}}
\big( \prod_{h=2,..., p-1} \|u\|_{L^\infty_T H^{2j_h+1}}\big) \\\nonumber &\times 
\|u\|_{L^\infty_T H^{2j+2}}^{1-s_0}\|u\|_{L^\infty_T H^{2j+3}}^{s_0}
\|u\|_{L^\infty_T H^{2k-2-2j}}^{1-s_0}\|u\|_{L^\infty_T H^{2k-2j-1}}^{s_0}
\\\nonumber & \lesssim \sqrt T \|u\|_{L^\infty_T H^{2k}}^{\frac{4k-3+2s_0}{2k-1} +\epsilon}.
\end{align*}
In order to estimate $V$
it is sufficient to control the following terms:
\begin{align}\label{fifth}
\int_0^T \|\partial_t^{m_1} u\|_{L^4} 
&\big(\prod_{i=2,...,p-1} \|\partial_t^{m_i} u\|_{L^\infty}\big )
\|\partial_t^{k-j}u\|_{L^{4}} \|\partial_t^k u\|_{L^{2}}, 
\\\nonumber &
m_1+...+m_{p-1}=j,\end{align}
and a usual we get:
\begin{align*}\eqref{fifth}\lesssim  \sqrt T \|u\|_{L^\infty_T H^{2k}}^{1+\epsilon} 
\|u\|_{L^\infty_T H^{2m_1}}^{1-s_0}&\|u\|_{L^\infty_T H^{2m_1+1}}^{s_0} 
\big(\prod_{i=2,...,p-1} \|u\|_{L^\infty_T H^{2m_i+1}}\big ) 
\|u\|_{L^\infty_T H^{2k-2j}}^{1-s_0}\|u\|_{L^\infty_T H^{2k-2j+1}}^{s_0}
\\\nonumber 
&\lesssim \sqrt T \|u\|_{L^\infty_T H^{2k}}^{\frac{4k-3+2s_0}{2k-1} +\epsilon}
\end{align*}
We conclude with the estimate of $VI$ that in turn can be reduced to controling
\begin{align}\label{sixth1} 
 \int_0^T 
\|\partial_t^{m_1} & u\|_{L^2}
\big( \prod_{i=2,...,p-1} \|\partial_t^{m_i} u\|_{L^\infty}\big)
\| \partial_t^{k-j} u\|_{L^\infty}
\|\partial_t^{l_1} u\|_{L^2}
\big( \prod_{i=2,...,p} \|\partial_t^{l_i} u\|_{L^\infty}\big) , 
\\\nonumber &m_1+...+m_{p-1}=j, \quad l_1+...+l_{p}=k-1,
\end{align}
and
we get
\begin{align*}\eqref{sixth1}&\lesssim T \|u\|_{L^\infty_T H^{2k}}^{\epsilon}
\|u\|_{L^\infty_T H^{2m_1}}
\big( \prod_{i=2,...,p-1} \|u\|_{L^\infty_T H^{2m_i+1}}\big)\\\nonumber
&\times
\|u\|_{L^\infty_T H^{2k-2j+1}} \|u\|_{L^\infty_T H^{2l_1}}
\big( \prod_{i=2,...,p-1} \|u\|_{L^\infty_T H^{2l_i+1}}\big)
\lesssim T \|u\|_{L^\infty_T H^{2k}}^{\frac{4k-4}{2k-1}+\epsilon}\,,
\end{align*}
which ends the proof.\end{proof}

The key estimate to deduce Theorem \ref{main2d} 
is the following one.
\begin{proposition}
Let us assume that $u(t,x)$ solves \eqref{genericNLS} 
with $d=2$ and $p\geq 3$. Then we have  
the following bound for every $T<1$ and for every $\epsilon>0$
$$\|u(T)\|_{H^{2k}}^2 - \|u(0)\|_{H^{2k}}^2 \lesssim \sqrt T \|u\|_{L^\infty_T H^{2k}}^{\frac{4k-3+2s_0}{2k-1} +\epsilon}+
\|u\|_{L^\infty_T H^{2k}}^{\frac{4k-4}{2k-1}+\epsilon}.$$
\end{proposition}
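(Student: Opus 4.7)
The plan is to integrate the modified energy identity from Proposition~\ref{energy} over $[0,T]$, invoke Proposition~\ref{profund} directly on the right-hand side, and then compare the leading term of $\mathcal{E}_{2k}$ to $\|u\|_{H^{2k}}^2$, controlling the discrepancy by quantities of strictly lower order than $\|u\|_{L^\infty_T H^{2k}}^{2}$.

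First I would integrate \eqref{energy2} in time to obtain
$$\mathcal{E}_{2k}(u(T)) - \mathcal{E}_{2k}(u(0)) = \int_0^T \frac{d}{dt}\mathcal{E}_{2k}(u(t))\,dt,$$
and apply Proposition~\ref{profund} term-by-term, so that
$$|\mathcal{E}_{2k}(u(T)) - \mathcal{E}_{2k}(u(0))| \lesssim \sqrt{T}\,\|u\|_{L^\infty_T H^{2k}}^{\frac{4k-3+2s_0}{2k-1}+\epsilon} + \|u\|_{L^\infty_T H^{2k}}^{\frac{4k-4}{2k-1}+\epsilon},$$
which is already the desired bound modulo the replacement of $\mathcal{E}_{2k}$ by $\|u\|_{H^{2k}}^{2}$.

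Next I would show that
$$\mathcal{E}_{2k}(u) = \|u\|_{H^{2k}}^2 + O\left(1 + \|u\|_{H^{2k}}^\gamma\right)$$
for some $\gamma < 2$ compatible with the two target exponents. For the leading piece $\|\partial_t^k u\|_{L^2}^2$, I would apply Proposition~\ref{equiBNN} with $s=0$ to write $\partial_t^k u = i^k \Delta_g^k u + r$ with $\|r\|_{L^2} \lesssim \|u\|_{H^{2k-1}}$; expanding the square and using that $\|\Delta_g^k u\|_{L^2}^2$ agrees with $\|u\|_{H^{2k}}^2$ modulo lower-order Sobolev norms (themselves bounded via \eqref{H1ener}), the error reduces to $\|u\|_{H^{2k}}\|u\|_{H^{2k-1}} + \|u\|_{H^{2k-1}}^2$. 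Interpolating $\|u\|_{H^{2k-1}} \lesssim \|u\|_{H^{2k}}^{(2k-2)/(2k-1)}$ against the uniform $H^1$ bound, this contributes at most $\|u\|_{H^{2k}}^{(4k-3)/(2k-1)}$, below both target exponents. For the two remaining integrals in $\mathcal{E}_{2k}$ I would expand via Leibniz (only $\partial_t^j u$ with $j\leq k-1$ appears), place the factors in $L^4\cdot L^4$ or $L^\infty\cdot L^2$ using the 2D Sobolev embeddings $H^{1/2}\hookrightarrow L^4$ and $H^{1+\epsilon}\hookrightarrow L^\infty$, and pass from $\partial_t$ to $\Delta_g$ via Proposition~\ref{equiBNN}. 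Since the total number of derivatives distributed is at most $2k-1$, interpolation between $H^{2k}$ and $H^1$ produces exponent sums $\sum_l \theta_l \leq (2k-2)/(2k-1)$, yielding a bound by $\|u\|_{H^{2k}}^{(4k-4)/(2k-1)+\epsilon}$, which matches the second term on the right-hand side.

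The main obstacle in this step is the combinatorial bookkeeping of interpolation exponents, exactly as in the proof of Proposition~\ref{profund}: one must verify that in every Leibniz expansion of $\partial_t^{k-1}\nabla_g(|u|^2)$ or $\partial_t^{k-1}(|u|^{p-1}u)$, after applying Sobolev embeddings and Proposition~\ref{equiBNN}, the resulting $H^{2k}$-power stays below $2$, with the $H^1$-excess absorbed by \eqref{H1ener} and any $L^\infty$ factor producing only an $\epsilon$ loss through $\|v\|_{L^\infty}\lesssim \|v\|_{H^1}^{1-\epsilon}\|v\|_{H^2}^{\epsilon}$. Granting these estimates, combining them with the integrated energy identity yields
$$\|u(T)\|_{H^{2k}}^2 - \|u(0)\|_{H^{2k}}^2 \lesssim \sqrt{T}\,\|u\|_{L^\infty_T H^{2k}}^{\frac{4k-3+2s_0}{2k-1}+\epsilon} + \|u\|_{L^\infty_T H^{2k}}^{\frac{4k-4}{2k-1}+\epsilon},$$
as claimed, and Theorem~\ref{main2d} (for even $m$) follows by the iteration scheme advertised in Remark~\ref{2dpolite}.
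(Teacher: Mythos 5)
Your proposal follows essentially the same route as the paper: split $\mathcal{E}_{2k}$ into the leading term $\|\partial_t^k u\|_{L^2}^2$ plus a remainder, bound the remainder by powers of $\|u\|_{L^\infty_T H^{2k}}$ strictly below $2$ via Leibniz, Sobolev embedding, Proposition \ref{equiBNN} and interpolation against \eqref{H1ener}, then integrate \eqref{energy2} and invoke Proposition \ref{profund}. The one slip is in your treatment of the leading term: the cross term $\|u\|_{H^{2k}}\|u\|_{H^{2k-1}}\lesssim\|u\|_{H^{2k}}^{(4k-3)/(2k-1)}$ is \emph{not} below the second target exponent $\tfrac{4k-4}{2k-1}+\epsilon$ (and it cannot be absorbed into the first target term, which carries a $\sqrt T$ prefactor). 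To recover the stated exponent, rebalance derivatives by self-adjointness of $\Delta_g$, writing the cross term as a sum of expressions $\bigl(\Delta_g^{k-1/2}u,\Delta_g^{k-j-1/2}\partial_t^j(u|u|^{p-1})\bigr)$ and placing $p-1$ of the factors of the nonlinearity in $L^\infty$ via $\|v\|_{L^\infty}\lesssim\|v\|_{H^1}^{1-\epsilon}\|v\|_{H^2}^{\epsilon}$, which gives $\|u\|_{H^{2k}}^{(4k-4)/(2k-1)+\epsilon}$; alternatively, note that the weaker exponent $\tfrac{4k-3}{2k-1}$ is still strictly less than $2$, so the iteration of Remark \ref{2dpolite} and hence Theorem \ref{main2d} would survive unchanged.
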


\begin{proof}
We write ${\mathcal E}_{2k} (u)= \|\partial_{t}^k u\|_{L^2}^2 + \mathcal R_{2k} (u)$
where $${\mathcal R}_{2k} (u)=-\frac{p-1}4 \int |\partial^{k-1}_{t} \nabla_g (|u|^2)|_g^2 |u|^{p-3}\hbox{dvol}_g - \int |\partial^{k-1}_{t} (|u|^{p-1} u)|^2 \hbox{dvol}_g.$$
We claim that
\begin{equation}\label{restowk}|{\mathcal R}_{2k} (u(t,x))|\lesssim_{\epsilon} \|u\|_{H^{2k}}^{\frac{4k-4}{2k-1}+\epsilon}
+\|u\|_{L^\infty H^{2k}}^{\frac{4k-6}{2k-1}+\epsilon}.
\end{equation}
In fact notice that arguing as along the proof of Proposition
\ref{profund} we get:
\begin{align*}\int&|\partial^{k-1}_{t} \nabla_g (|u|^2)|_g^2 |u|^{p-3} \hbox{dvol}_g
\lesssim \sum_{k_1+k_2=k-1}
\|\partial_t^{k_1} u\|_{W^{1,2}}^2 \|\partial_t^{k_2} u\|^2_{L^\infty} 
\|u\|^{p-3}_{L^\infty}\\\nonumber & \lesssim 
\sum_{k_1+k_2=k-1} \|u\|_{H^{2k_1+1}}^2 \|u\|_{H^{2k_2+1}}^2 
\|u\|_{H^{2k}}^\epsilon\lesssim \|u\|_{H^{2k}}^{\frac{4k-4}{2k-1}+\epsilon}
\end{align*}
and also
\begin{align*}\int&|\partial^{k-1}_{t} (|u|^{p-1} u)|^2 \hbox{dvol}_g
\lesssim \sum_{j_1+....+j_p=k-1}
\|\partial_t^{j_1} u\|_{L^2(M^2)}^2 \big( \prod_{h=1,..., p} \|\partial_t^{j_h} u\|_{L^\infty(M^2)}^2
\big)\\\nonumber 
& \lesssim \sum_{j_1+....+j_p=k-1}\|u\|_{H^{2j_1}}^2 \big( \prod_{h=1,..., p} \|u\|_{H^{2j_h+1}(M^2)}^2
\big) \|u\|_{L^\infty H^{2k}}^\epsilon\lesssim 
\|u\|_{L^\infty H^{2k}}^{\frac{4k-6}{2k-1}+\epsilon}.
\end{align*}
Next notice that if we integrate the identity 
\eqref{energy2} and we use Proposition \ref{profund}
then
$$\|\partial_t^{k} u(T)\|_{L^2}^2 - \|\partial_t^{k} u(0)\|_{L^2}^2
\lesssim \sup_{(0,T)} |\mathcal{R}_{2k} (u)|  + 
\sqrt T \|u\|_{L^\infty_T H^{2k}}^{\frac{4k-3+2s_0}{2k-1} +\epsilon}+
\|u\|_{L^\infty_T H^{2k}}^{\frac{4k-4}{2k-1}+\epsilon}.$$
We conclude by \eqref{restowk} and Proposition \ref{equiBNN}.
\end{proof}

\section{Exponential growth for $H^{2k}$ norms of solutions to cubic NLS on $M^3$}

The aim of this section is the proof of Theorem \ref{main3d} in the case $m=2k$.

The following is the analogue version of Proposition \ref{profund} in $3d$ for the cubic NLS.
\begin{proposition}\label{profund3}
Let us assume that $u(t,x)$ solves \eqref{genericNLS} 
with $d=3$ and $p=3$. Then we have  
the following bound for every $T\in (0, 1)$
\begin{align*}
\int_0^T |\hbox{ r.h.s. } \eqref{energy2}| ds
&\lesssim T \|u\|_{L^\infty_T H^{2k}}^2 + \|u\|_{L^\infty_T H^{2k}}^\gamma
\end{align*}
for some $\gamma\in (0,2)$.
\end{proposition}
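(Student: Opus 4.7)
The plan is to mimic the structure of the proof of Proposition \ref{profund}, but replacing the $L^4((0,T)\times M^2)$ Strichartz estimates by the $L^2_T L^6(M^3)$ estimates \eqref{withoutderivative} and \eqref{derivative}, and replacing the $L^\infty(M^2)$ control of low‑derivative factors (which used $H^1(M^2)\hookrightarrow L^{2p}(M^2)$) by the weaker $L^6(M^3)$ or $L^3(M^3)$ control given by $H^1(M^3)\hookrightarrow L^6(M^3)$ and $H^{1/2}(M^3)\hookrightarrow L^3(M^3)$. Since the nonlinearity is cubic, I will work from the simplified expression \eqref{energy3}, which has only four terms instead of six.

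First I would treat each of the four terms on the right‑hand side of \eqref{energy3} separately by expanding every time derivative via Leibniz. Typically this produces integrals of the form
\[
\int_0^T\!\!\int_{M^3} \partial_t^{j_1} u\,\partial_t^{j_2} u\,\partial_t^{j_3}\nabla_{g} u\,\partial_t^{j_4}\nabla_{g} u\,dvol_g\,ds,\qquad j_1+j_2+j_3+j_4=2k-1,
\]
plus analogous quartic expressions involving at most one $\Delta_g$ and no $\nabla_g$ (from the second, third and fourth terms of \eqref{energy3}). For each such integral I put the two factors carrying the most time derivatives into the Strichartz spaces $L^2_T L^6(M^3)$ via Proposition \ref{theorStrichartz}, and the two remaining factors into $L^\infty_T L^3(M^3)\supset L^\infty_T H^{1/2}(M^3)$. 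Hölder in space ($\tfrac{1}{6}+\tfrac{1}{6}+\tfrac{1}{3}+\tfrac{1}{3}=1$) and in time ($\tfrac{1}{2}+\tfrac{1}{2}+\infty+\infty=1$) then reduces the integral to a product of Sobolev norms of the $\partial_t^{j_i}u$.

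Next I would invoke Proposition \ref{equiBNN} to trade every $\|\partial_t^{j_i}u\|_{H^{s_i}}$ for $\|u\|_{H^{2j_i+s_i}}$, and interpolate between this norm and $\|u\|_{H^1}$ (which is uniformly bounded by \eqref{H1ener}) to write the bound as $\|u\|_{L^\infty_T H^{2k}}^{\alpha}$ times an explicit power of $T$. A bookkeeping check, identical to the one carried out after \eqref{third1}–\eqref{sixth1} in the proof of Proposition \ref{profund}, will show that the sum of the interpolation exponents hits $2$ exactly when the $L^2_T$ Strichartz norms produce, through the second summand of \eqref{derivative} and \eqref{withoutderivative}, a factor of the form $\sqrt T\,\|u\|_{H^{2k-1}}^{1/2}\|u\|_{H^{2k}}^{1/2}$ for both top‑derivative factors; squaring these two copies yields the $T\,\|u\|_{L^\infty_T H^{2k}}^2$ contribution. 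All the other contributions involve at least one interpolation weight strictly less than $1$, hence give an exponent $\gamma<2$, as desired.

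The main obstacle, compared to the $2d$ case, is that in $3d$ we have no usable $L^\infty$ Sobolev embedding at the scaling we need; consequently the ``low‑derivative'' factors must be estimated in $L^3(M^3)$, which costs $H^{1/2}$ of regularity, leaving less room in the interpolation budget. One has to check carefully that the sum $\sum_i \theta_i$ in the interpolation step never exceeds $1$, the sharp case being precisely the two top‑derivative Strichartz factors, and that the nonlinear ``garbage terms'' appearing in \eqref{withoutderivative}–\eqref{derivative} also give lower‑order contributions. This verification is straightforward but tedious; the crucial point is that in the cubic case only two $u$‑factors accompany each derivative, so the linear regularity budget closes exactly at $\alpha=2$.
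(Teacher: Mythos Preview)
Your plan has a genuine gap in the H\"older allocation. In the splitting $\tfrac{1}{6}+\tfrac{1}{6}+\tfrac{1}{3}+\tfrac{1}{3}=1$ you propose, every factor must land either in $L^2_T L^6$ (Strichartz) or in $L^\infty_T L^3\supset L^\infty_T H^{1/2}$, and \emph{both} options cost half a space derivative. But terms I, II and IV of \eqref{energy3} always contain a factor $\partial_t^{k} u$ (take $k_1=k$, $k_2=0$ in the Leibniz expansion of $\partial_t^k(|u|^2)$, or look at the explicit $\partial_t^k\bar u$ in IV). For that factor, $\|\partial_t^k u\|_{L^3}\lesssim\|\partial_t^k u\|_{H^{1/2}}$ already requires $\|u\|_{H^{2k+1/2}}$, and every piece of \eqref{withoutderivative} with $j=k$ carries a positive power of $\|u\|_{H^{2k+1}}$; neither quantity is controlled. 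Your rule ``put the two most differentiated factors into Strichartz'' makes this worse rather than better: Strichartz is where the half-derivative loss lives, so the top-order factor must instead go into the \emph{cheapest} slot. The paper therefore uses the splitting $\tfrac{1}{2}+\tfrac{1}{6}+\tfrac{1}{6}+\tfrac{1}{6}=1$, placing $\partial_t^k u$ in $L^\infty_T L^2$ at zero cost, one further factor in $L^\infty_T L^6\supset L^\infty_T H^1$ (Sobolev), and only the two remaining lower-order factors in $L^2_T L^6$ or $L^2_T W^{1,6}$ via \eqref{withoutderivative}--\eqref{derivative}. With that allocation the interpolation exponents add up to exactly $2$, and the two $\sqrt T$ Strichartz contributions furnish the factor $T$.

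Two smaller points. Term III of \eqref{energy3} contains $\partial_t^{k-1}(|u|^2\bar u)$ and therefore expands into \emph{six}-factor integrals, not four; the paper handles the two extra factors through $H^2(M^3)\hookrightarrow L^\infty(M^3)$, an embedding you dismissed as unavailable but which is in fact used. And your description of the sharp case (``$\sqrt T\,\|u\|_{H^{2k-1}}^{1/2}\|u\|_{H^{2k}}^{1/2}$ for both top-derivative factors'') cannot hold literally, since in term I the two gradient factors satisfy $j_1+j_2=k-1$ and hence cannot simultaneously carry $k-1$ time derivatives; the exponent $2$ is reached by summing the individual contributions, not by squaring a single worst factor.
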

\begin{proof}
 Since we work on a $3d$ compact manifold we simplify the notations as follows:
$L^q, W^{s, q}, H^s$ denote the spaces $L^q(M^3), W^{s, q}(M^3), H^s(M^3)$.
In the sequel we shall also make use of the following
inequalities:
\begin{align}\label{interpeas3}
\|u\|_{L^\infty_T H^{s}}&\lesssim_{_{\|\varphi\|_{H^1}}} \|u\|_{L^\infty_T H^{2k}}^\frac{s-1}{2k-1},  \quad
\end{align}
that in turn follow by combining an elementary interpolation inequality 
with \eqref{H1ener}.
We also notice that by combining Proposition \ref{equiBNN}
and Proposition \ref{theorStrichartz} with \eqref{interpeas3}
we get:
\begin{align}\label{L2L6}
\|\partial_t^j u\|_{L^2_TL^{6}} \lesssim_\epsilon  & 
\|u \|_{L^\infty_T H^{2j}}^{1-\epsilon}\|u\|_{L^\infty_T H^{2j+1}}^\epsilon+  
\sqrt T \|
u\|_{L^\infty_T H^{2j}}^{1/2}
\|u\|_{L^\infty_T H^{2j+1}}^{1/2} \\\nonumber & +\sqrt T
\sum_{j_1+j_2+j_3=j}
\|u\|_{L^\infty_T H^{2j_1}} \|u\|_{L^\infty_T H^{2j_2+1}} 
\|u\|_{L^\infty_T H^{2j_3+1}}\\\nonumber
&\lesssim 
\|u\|_{L^\infty_T H^{2k}}^{\frac{2j-1+\epsilon}{2k-1} }+ \sqrt T \|u\|_{L^\infty_T H^{2k}}^{\frac{4j-1}{4k-2}}
+ \sqrt T \|u\|_{L^\infty_T H^{2k}}^{\frac{2j-1}{2k-1}}
\end{align}
and 
\begin{align}\label{L2W16}
\|\partial_t^j u\|_{L^2_TW^{1,6}} \lesssim_\epsilon  & 
\|u \|_{L^\infty_T H^{2j+1}}^{1-\epsilon}\|u\|_{L^\infty_T H^{2j+2}}^\epsilon+  
\sqrt T \|
u\|_{L^\infty_T H^{2j+1}}^{1/2}
\|u\|_{L^\infty_T H^{2j+2}}^{1/2} \\\nonumber & +\sqrt T
\sum_{j_1+j_2+j_3=j}
\|u\|_{L^\infty_T H^{2j_1+1}} \|u\|_{L^\infty_T H^{2j_2+1}} 
\|u\|_{L^\infty_T H^{2j_3+1}}\\\nonumber
&\lesssim 
\|u\|_{L^\infty_T H^{2k}}^{\frac{2j+\epsilon}{2k-1} }+ \sqrt T \|u\|_{L^\infty_T H^{2k}}^{\frac{4j+1}{4k-2}}
+ \sqrt T \|u\|_{L^\infty_T H^{2k}}^{\frac{2j}{2k-1}}
\end{align}
We denote by $I,II,III,IV$ the four  terms on each line of the r.h.s. 
in  \eqref{energy3}.
We first estimate the term $I$.
By developing the time derivatives $\partial_t^k$ and 
$\partial_t^{k-1}$, and by using the H\"older inequality
we are reduced  to estimating:
\begin{equation}\label{prim4}
\int_0^T \|\partial_t^{k_1} u\|_{L^2} \|\partial_t^{k_2}u\|_{L^6} 
\|\partial_t^{j_1} u\|_{W^{1,6}} \|\partial_t^{j_2} u\|_{W^{1,6}} ds,
\end{equation}
\begin{equation*} j_1+j_2=k-1,  k_1+k_2=k.
\end{equation*}
Notice that we have by combining the Sobolev embedding $H^1(M^3)\subset L^6(M^3)$
with
Proposition \ref{equiBNN} for $d=3$ and $p=3$, and \eqref{interpeas3}
\begin{align*}\eqref{prim4}&\lesssim \|u\|_{L^\infty_T H^{2k_1}} \|u\|_{L^\infty_T H^{2k_2+1}} 
\|\partial_t^{j_1}u\|_{L^2_T W^{1,6}} \|\partial_t^{j_2}u\|_{L^2_T 
W^{1,6}}\\\nonumber
&\lesssim \|u\|_{L^\infty_T H^{2k}} \|\partial_t^{j_1}u\|_{L^2_T W^{1,6}} \|\partial_t^{j_2}u\|_{L^2_T 
W^{1,6}}
\end{align*}
and we can continue the estimate by using \eqref{L2W16}.
Indeed we should estimate $\|\partial_t^{j}u\|_{L^2_T W^{1,6}}$
by three terms on the r.h.s. in \eqref{L2W16}.
However we can consider only the term that gives the worse growth
w.r.t. to the power of $\|u\|_{L^\infty H^{2k}}$ (i.e. only the second term
on the r.h.s. of \eqref{L2W16} and all the other terms give a smaller power of
 $\|u\|_{L^\infty H^{2k}}$).
Summarizing we get
 \begin{align*}\eqref{prim4}&
 \lesssim T \|u\|_{L^\infty_T H^{2k}}^2 + \|u\|_{L^\infty_T H^{2k}}^{\gamma}
\end{align*}
for a suitable $\gamma\in (0, 2)$. Next we estimate the term II that can be reduced to estimate the following terms:
\begin{align}
\label{second2} \int_0^T \|\partial_t^{k_1} u\|_{L^2} &\|
\partial_t^{k_2}u\|_{L^6} \|\partial_t^{j} \Delta_g u\|_{L^6}
\|\partial_t^{k-1-j}u\|_{L^6},\\\nonumber
&j=0,...,k-2; \quad k_1+ k_2=k.
\end{align}
By using the Sobolev embedding $H^1(M^3)\subset L^6(M^3)$
in conjunction with Proposition 
\ref{equiBNN} we get
$$\eqref{second2}\lesssim \|u\|_{L^\infty_T H^{2k_1}} 
\|\partial_t^{k_2}u\|_{L^2_T L^{6}} 
\|u\|_{L^\infty_TH^{2j+3}}
\|\partial_t^{k-1-j}u\|_{L^2_T L^6}$$
By using \eqref{L2L6} and \eqref{interpeas3} we get:
$$\eqref{second2}\lesssim \|u\|_{L^\infty_T H^{2k_1}} 
\|u\|_{L^\infty_T H^{2k}}^{\frac{4k_2-1}{4k-2}} \|u\|_{L^\infty_TH^{2j+3}}
\|u\|_{L^\infty_T H^{2k}}^{\frac{4(k-1-j)-1}{4k-2}} 
\lesssim T \|u\|_{L^\infty_T H^{2k}}^2 +  \|u\|_{L^\infty_T H^{2k}}^\gamma,$$
where $\gamma\in (0, 2)$. Concerning the term III we are reduced to 
\begin{align}
\label{terzo2}
\int_0^T \|\partial_t^{j_1} u\|_{L^\infty} &\|\partial_t^{j_2} u\|_{L^\infty} 
\|\partial_t^{k-j} u\|_{L^2}\|\partial_t^{k_1} u\|_{L^6} 
\|\partial_t^{k_2} u\|_{L^6}  \|\partial_t^{k_3} u\|_{L^6}, \\\nonumber& 
j_1+j_2=j, \quad 0\leq j \leq k-1, \quad k_1+k_2+k_3=k-1.
\end{align}
By the Sobolev embedding $H^1(M^3)\subset L^6(M^3)$ and
$H^2(M^3)\subset L^\infty(M^3)$
and Proposition \ref{equiBNN}
we get:
$$\eqref{terzo2}\lesssim \|u\|_{L^\infty_T H^{2j_1+2}}
\|u\|_{L^\infty_T H^{2j_2+2}} \|u\|_{L^\infty_T H^{2k-2j}}
\|\partial_t^{k_1} u\|_{L^2_T L^6}\|\partial_t^{k_2}  u\|_{L^2_T L^6}\|u\|_{L^\infty_T H^{2k_3+1}}.$$
By combining \eqref{L2L6} with \eqref{interpeas3} we get
$$\eqref{terzo2}\lesssim T \|u\|_{L^\infty_TH^{2k}}^2+\|u\|_{L^\infty_TH^{2k}}^{\gamma}$$
for $\gamma\in (0, 2)$. Concerning IV  it is sufficient to estimate
\begin{align}\label{quarto}
\int_0^T \|\partial_t^k u\|_{L^2} & \|\partial_t^{k-j} u\|_{L^6}
 \|\partial_t^{j_1} u\|_{L^6} 
\|\partial_t^{j_2} u\|_{L^6},\\\nonumber
j_1+j_2& =j, \quad 1\leq j \leq k-1.
\end{align}
We can control it by using $H^1(M^3)\subset L^6(M^3)$
and Proposition \ref{equiBNN}:
$$\eqref{quarto}\lesssim  \|u\|_{L^\infty_T H^{2k}} \|u\|_{L^\infty_T H^{2k-2j +1}}
 \|\partial_t^{j_1} u\|_{L^2_T L^6} \|\partial_t^{j_1} u\|_{L^2_TL^6}
$$  
and again by \eqref{L2L6} and \eqref{interpeas3} we get
$$\eqref{quarto}\lesssim T \|u\|_{L^\infty_T H^{2k}}^2 + \|u\|_{L^\infty_T H^{2k}}^\gamma$$
for some $\gamma\in (0,2)$. \end{proof}

\section{Polynomial growth of $H^2$ for sub cubic NLS on $M^3$}
Next we prove Theorem \ref{main3dsub}.
We introduce the following energy
$${\mathcal F}_2 (v(t,x))=
\int_{M^3} |\partial_t v|^2 \hbox{dvol}_g-(p-1) 
\int_{M^3} |v|^{p-1} |\nabla_g |v||^2 \hbox{dvol}_g-\frac{p-1}p \int_{M^3} |v|^{2p}
\hbox{dvol}_g.$$
\begin{proposition}
Let $u(t,x)$ be solution to \eqref{genericNLS} for $d=3$ and $2<p<3$,
then we have
\begin{multline}\label{F2}
\frac d{dt} {\mathcal F}_2 u(t,x) =(p-1)(p-3) \int_{M^3} |u|^{p-2}
\partial_t |u|  |\nabla_g |u||^2 \hbox{dvol}_g\\
{}+ 2 (p-1) \int_{M^3} |u|^{p-2} \partial_t |u|  |\nabla_g u|_g^2 \hbox{dvol}_g.\end{multline}
\end{proposition}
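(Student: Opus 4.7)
The plan is to differentiate $\mathcal{F}_2(u(t,x))$ term by term, using the equation $\partial_t u = i\Delta_g u - i|u|^{p-1}u$ to express $\partial_t^2 u$, and then perform integrations by parts together with the two elementary identities
\[
\Delta_g(|u|^2) = 2|u|\,\Delta_g|u| + 2|\nabla_g|u||^2, \qquad 2\Re(u\Delta_g\bar u) = \Delta_g(|u|^2) - 2|\nabla_g u|_g^2,
\]
to rearrange everything into the two advertised expressions. To avoid the non-smoothness of $|u|$ at zeros of $u$, I would first work formally and then justify by regularizing $|u|$ as $\sqrt{|u|^2+\eta}$ and sending $\eta\to 0$; all the quantities involved are controlled in an $H^{2}$-based framework via Sobolev embedding, so the limit is routine.

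First I would treat $\frac{d}{dt}\int |\partial_t u|^2$. Writing $\partial_t^2u=i\Delta_g\partial_tu-i\partial_t(|u|^{p-1}u)$, the $\Delta_g\partial_t u$ contribution gives $-2\Im\int|\nabla_g\partial_tu|_g^2=0$, while the $|u|^{p-1}\partial_t u$ piece of $\partial_t(|u|^{p-1}u)$ also drops out since $|u|^{p-1}|\partial_t u|^2$ is real. One is left with
\[
\tfrac{d}{dt}\!\int |\partial_t u|^2 \,dv_g = 2(p-1)\,\Im \int |u|^{p-2}\partial_t|u|\; u\,\overline{\partial_t u}\,dv_g.
\]
Substituting the equation once more for $\overline{\partial_t u}$ gives $\Im(u\overline{\partial_t u})=-\Re(u\Delta_g\bar u)+|u|^{p+1}$, so using the two identities above this term rewrites as
\[
-2(p-1)\!\int|u|^{p-1}\partial_t|u|\,\Delta_g|u| -2(p-1)\!\int|u|^{p-2}\partial_t|u|\,|\nabla_g|u||^2 + 2(p-1)\!\int|u|^{p-2}\partial_t|u|\,|\nabla_g u|_g^2 + 2(p-1)\!\int|u|^{2p-1}\partial_t|u|.
\]

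Next I would handle the last two terms of $\mathcal F_2$. The $|u|^{2p}$ term is immediate: $\frac{d}{dt}\int|u|^{2p}=2p\int|u|^{2p-1}\partial_t|u|$, which precisely cancels the final piece $2(p-1)\int|u|^{2p-1}\partial_t|u|$ coming from Term A. For the gradient term, computing
\[
\tfrac{d}{dt}\!\int|u|^{p-1}|\nabla_g|u||^2 = (p-1)\!\int|u|^{p-2}\partial_t|u|\,|\nabla_g|u||^2 + 2\!\int|u|^{p-1}(\nabla_g|u|,\nabla_g\partial_t|u|)_g,
\]
and integrating by parts using $\operatorname{div}_g(|u|^{p-1}\nabla_g|u|)=(p-1)|u|^{p-2}|\nabla_g|u||^2+|u|^{p-1}\Delta_g|u|$ yields
\[
-(p-1)\tfrac{d}{dt}\!\int|u|^{p-1}|\nabla_g|u||^2 = (p-1)^2\!\int|u|^{p-2}|\nabla_g|u||^2\partial_t|u|+2(p-1)\!\int|u|^{p-1}\Delta_g|u|\,\partial_t|u|.
\]

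Summing the three contributions one observes two key cancellations: the $\int|u|^{p-1}\Delta_g|u|\partial_t|u|$ terms from Term A and Term B have opposite signs and match, and the $\int|u|^{2p-1}\partial_t|u|$ pieces from Term A and Term C also cancel. Collecting the remaining $|\nabla_g|u||^2\partial_t|u|$ coefficients gives $-2(p-1)+(p-1)^2=(p-1)(p-3)$, while the $|\nabla_g u|_g^2\partial_t|u|$ term carries the coefficient $2(p-1)$, which is exactly the claimed identity \eqref{F2}. The main obstacle in the argument is therefore bookkeeping: one must carefully follow the two cancellations; aside from that, the only genuinely delicate point is the mild regularization needed to legitimize manipulations of $\nabla_g|u|$ and $\Delta_g|u|$ on the zero set of $u$, which poses no real difficulty for $H^2$ solutions.
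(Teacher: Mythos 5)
Your computation is correct and follows essentially the same route as the paper: differentiate, use the equation twice, integrate by parts, and exploit the identities $\Delta_g(|u|^2)=2\Re(u\Delta_g\bar u)+2|\nabla_g u|_g^2=2|u|\Delta_g|u|+2|\nabla_g|u||^2$; the only (cosmetic) difference is that you differentiate all three terms of $\mathcal F_2$ and exhibit the cancellations of the $\int |u|^{p-1}\Delta_g|u|\,\partial_t|u|$ and $\int |u|^{2p-1}\partial_t|u|$ contributions, whereas the paper computes $\frac{d}{dt}\|\partial_t u\|_{L^2}^2$ alone, never introduces $\Delta_g|u|$, and instead recognizes exact time derivatives that reconstruct the remaining terms of the energy. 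Your remark on regularizing $|u|$ by $\sqrt{|u|^2+\eta}$ is a welcome extra precaution that the paper leaves implicit.
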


\begin{proof}
We start with the following computation
\begin{align*}
\frac{d}{dt} \| \partial_{t} u\|^{2}_{L^2} & = 2 \Re (\partial^{2}_{t} u , \partial_{t} u) = 2 \Re (\partial_{t} (-\Delta_g u+|u|^{p-1} u), i\partial_{t} u) \\
& = 2 \Im \int_{M^3} (\partial_{t} \nabla_g u, \partial_{t} \nabla_g u)_g
\hbox{dvol}_g
+ 2\Re(\partial_{t} (|u|^{p-1} u),i\partial_{t}u)
\end{align*}
where $(f, g)=\int_{M^3} f \bar g\hbox{dvol}_g$.
Since the first term vanishes we get 
\begin{align*}
\frac{d}{dt} \| \partial_{t} u\|^{2}_{L^2} & =   2\Re(\partial_{t} (|u|^{p-1})  u,i\partial_{t}u)+ 2\Re(|u|^{p-1} \partial_{t}  u,
i\partial_{t}u)\\\nonumber &=
2\Re(\partial_{t} (|u|^{p-1})  u,-\Delta_g u)+2\Re(\partial_{t} (|u|^{p-1})  u,
|u|^{p-1}u))\\\nonumber
 & =  2\Re(\partial_{t} (|u|^{p-1})  u,-\Delta_g u)+
 \frac{p-1}p \frac d{dt} \int_{M^3} |u|^{2p} \hbox{dvol}_g.\end{align*}
By using the identity
$$
\Delta_g (|u|^2)= u \Delta_g \bar u+\bar u \Delta_g u+ 2 |\nabla_g u|_g^2
$$
we get
\begin{align*}
2\Re(\partial_{t} (|u|^{p-1})  u,-\Delta_g u)&= - (\partial_t |u|^{p-1}, \Delta_g |u|^2)+ 2 (\partial_t |u|^{p-1}, |\nabla_g u|_g^2)+
\\\nonumber
&= (\partial_t \nabla_g |u|^{p-1}, \nabla_g |u|^2)+ 2 (\partial_t |u|^{p-1}, |\nabla_g u|_g^2)+
\\\nonumber
&= 2 (p-1) (\partial_t (|u|^{p-2} 
\nabla_g |u|), |u| \nabla_g |u|)+ 2 (\partial_t |u|^{p-1}, |\nabla_g u|_g^2)
\\\nonumber
&= 2 (p-1) \frac d{dt} (|u|^{p-2} 
\nabla_g |u|, |u| \nabla_g |u|)- 2 (p-1)(|u|^{p-2} 
\nabla_g |u|, \partial_t |u| \nabla_g |u|)\\\nonumber&-2 (p-1)(|u|^{p-2} 
\nabla_g |u|, |u| \nabla_g \partial_t |u|)
+ 2 (\partial_t |u|^{p-1}, |\nabla_g u|_g^2)\\\nonumber
&= 2 (p-1) \frac d{dt} (|u|^{p-2} 
\nabla_g |u|, |u| \nabla_g |u|)- 2 (p-1)(|u|^{p-2} 
\nabla_g |u|, \partial_t |u| \nabla_g |u|)\\\nonumber
&-(p-1) 
\frac d{dt}(|u|^{p-1}, |\nabla_g |u||^2)+(p-1) 
(\partial_t |u|^{p-1}, |\nabla_g |u||^2)+ 2 (\partial_t |u|^{p-1}, |\nabla_g u|_g^2)\,,
\end{align*}
which ends the proof.\end{proof}

The following proposition is the analogue version of Proposition \ref{profund3}
in the subcubic case.
\begin{proposition}
We have for every $T\in (0,1)$
$$\int_0^T |r.h.s. \eqref{F2}| ds\lesssim 
T\|u\|_{L^\infty_T H^2}^\frac{p+5}4+ \|u\|_{L^\infty_T H^2}^\gamma$$
for some $\gamma\in (0, \frac{p+5}4).$
\end{proposition}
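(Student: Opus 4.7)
The plan is to bound the two terms on the right-hand side of \eqref{F2} simultaneously. Using the pointwise inequalities $|\partial_t|u||\leq |\partial_t u|$ and Kato's inequality $|\nabla_g|u||\leq |\nabla_g u|_g$, both are dominated by
\[
J(T) := \int_0^T \int_{M^3} |u|^{p-2}\,|\partial_t u|\,|\nabla_g u|_g^2\,\hbox{dvol}_g\,dt,
\]
and the task reduces to proving $J(T)\lesssim T\|u\|_{L^\infty_T H^2}^{(p+5)/4}+\|u\|_{L^\infty_T H^2}^\gamma$ for some $\gamma<(p+5)/4$. Getting the sharp exponent $(p+5)/4$ forces a specific Hölder split: one places $|u|^{p-2}$ in $L^{6/(p-2)}$ (where it is trivially controlled via $\|u\|_{L^6}^{p-2}\lesssim 1$ by \eqref{H1ener}), $\partial_t u$ in $L^2$, and each factor of $\nabla_g u$ in $L^r$ with $r=12/(5-p)\in(4,6)$, since $(p-2)/6+1/2+2/r=1$.

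Two inputs then come into play. First, using the equation $i\partial_t u=-\Delta_g u+|u|^{p-1}u$ together with $H^1(M^3)\hookrightarrow L^{2p}(M^3)$ (valid since $p<3$), one obtains $\|\partial_t u\|_{L^\infty_T L^2}\lesssim \|u\|_{L^\infty_T H^2}$. Second, the interpolation identity $L^r=[L^2,L^6]_{(p+1)/4}$ combined with \eqref{H1ener} yields
\[
\|\nabla_g u\|_{L^r}^2 \lesssim \|\nabla_g u\|_{L^2}^{(3-p)/2}\|\nabla_g u\|_{L^6}^{(p+1)/2} \lesssim \|\nabla_g u\|_{L^6}^{(p+1)/2}.
\]
Applying Hölder in time with the conjugate exponents $(4/(3-p),4/(p+1))$ (so that $\|\nabla_g u\|_{L^6}^{(p+1)/2}$ sits in $L^{4/(p+1)}_t$ whose norm equals $\|\nabla_g u\|_{L^2_T L^6}^{(p+1)/2}$) then yields
\[
J(T) \lesssim T^{(3-p)/4}\,\|u\|_{L^\infty_T H^2}\,\|\nabla_g u\|_{L^2_T L^6}^{(p+1)/2}.
\]

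The final ingredient is a Strichartz estimate for $\nabla_g u$, which I obtain by applying \eqref{duhamel3} to $v=\langle-\Delta_g\rangle^{1/2} u$. The forcing term behaves pointwise like $|u|^{p-1}\nabla_g u$, whose $L^{6/5}$-norm is estimated by $\|u\|_{L^6}^{p-1}\|\nabla_g u\|_{L^2}\lesssim 1$ thanks to $p<3$ and \eqref{H1ener}. This produces
\[
\|\nabla_g u\|_{L^2_T L^6}\lesssim_\epsilon \|u\|_{L^\infty_T H^2}^\epsilon+T^{1/2}\|u\|_{L^\infty_T H^2}^{1/2}.
\]
Raising to the power $(p+1)/2$ and substituting, the product $T^{(3-p)/4}\|u\|_{L^\infty_T H^2}\cdot T^{(p+1)/4}\|u\|_{L^\infty_T H^2}^{(p+1)/4}$ collapses exactly to $T\|u\|_{L^\infty_T H^2}^{(p+5)/4}$, which is the target. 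The remaining cross term $T^{(3-p)/4}\|u\|_{L^\infty_T H^2}^{1+\epsilon}$ is handled by Young's inequality with the same dual pair $(4/(3-p),4/(p+1))$, producing an additional $T$ (absorbed into the leading term) and a pure $\|u\|_{L^\infty_T H^2}^{4(1+\epsilon)/(p+1)}$ contribution.

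The main obstacle is verifying that every residual exponent stays strictly below $(p+5)/4$. For the Young remainder one checks $4/(p+1)<(p+5)/4$, equivalent to $16<(p+1)(p+5)$, which holds throughout $p\in(2,3)$. More conceptually, the entire argument breaks down exactly at $p=3$: the Hölder time exponent $4/(3-p)$ blows up, and the critical exponent $(p+5)/4$ reaches $2$, making iteration sub-linear and forcing one to fall back to the exponential bound of Theorem \ref{main3d}. This is precisely the structural reason why sub-cubic NLS on $M^3$ admits polynomial growth whereas cubic NLS does not.
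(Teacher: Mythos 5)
Your argument is correct and is essentially the paper's own proof: the same Hölder split with $|u|^{p-2}\in L^{6/(p-2)}$, $\partial_t u\in L^2$, and the gradient factors in $L^{12/(5-p)}$, the same interpolation between $H^1$ and $W^{1,6}$ producing the factor $T^{(3-p)/4}\|u\|_{L^\infty_T H^2}\|u\|_{L^2_T W^{1,6}}^{(p+1)/2}$, and the same final appeal to the Strichartz bound \eqref{L2W16} at $j=0$ (equivalently \eqref{duhamel3} applied to $\langle-\Delta_g\rangle^{1/2}u$). The only difference is cosmetic — you interpolate pointwise in time before applying Hölder in $t$, whereas the paper works directly with the admissible pair $(8/(p+1),12/(5-p))$ — and your closing observation about the degeneration at $p=3$ is a correct reading of why the cubic case only yields exponential bounds.
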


\begin{proof} We can write the terms on the r.h.s. in \eqref{F2} as $I, II$.
We estimate $I$ and the estimate of $II$ is similar.
We estimate $I$ as follows (we shall use the diamagnetic inequality in order to 
remove $|.|$ inside the derivatives $\nabla_g$ and $\partial_t$)
by the H\"older inequality:
$$|I|\lesssim \|\partial_t u\|_{L^\infty_T L^2} \|u\|_{L^2_T W^{1,\frac{12}{5-p}}}^2 
\|u\|_{L^{6}}^{p-2}\lesssim T^{\frac{6-2p}8}\|\partial_t u\|_{L^\infty_T L^2} \|u\|_{L^\frac{8}{p+1}_T W^{1,\frac{12}{5-p}}}^2$$
where the couple
$\big (\frac 8{p+1}, \frac{12}{5-p}\big )$ is Strichartz admissible.
Notice that by using the equation solved by $u(t,x)$ we are allowed to 
replace $\|\partial_t u\|_{L^\infty_T L^2}$ with $\|u\|_{L^\infty_T H^2}$ and hence
$$|I|\lesssim T^{\frac{6-2p}8}\|u\|_{L^\infty_T H^2} \|u\|_{L^\frac{8}{p+1}_T W^{1,\frac{12}{5-p}}}^2.$$
Next notice that we have the following bound:
$$\|u\|_{L^\frac{8}{p+1}_T W^{1,\frac{12}{5-p}}}\lesssim \|u\|_{L^\infty_T H^1}^{\frac{3-p}{4}}
\|u\|_{L^2_T W^{1,6}}^\frac{p+1}4$$ and hence due to the conservation of the energy
we can continue the estimate above as follows:
$$|I|\lesssim T^{\frac{6-2p}8}\|u\|_{L^\infty_T H^2}\|u\|_{L^2_T W^{1,6}}^\frac{p+1}2$$
We can continue the estimate
by 
using the Strichartz estimates \eqref{L2W16} for $j=0$ 
(which are still available for solutions to subcubic NLS):
$$|I|\lesssim T
\|u\|_{L^\infty_T H^2}\|u\|_{L^\infty_T H^2}^\frac{p+1}4+ \|u\|_{L^\infty_T H^2}^\gamma$$
for some $\gamma\in (0, \frac{p+5}4)$
(indeed we have estimated the term 
$\|u\|_{L^2_T W^{1,6}}$ with the middle term on the r.h.s. in 
\eqref{L2W16} since it is the one that involves the larger power of $\|u\|_{L^\infty_TH^2}$,
and the lower power are absorbed in the term $\|u\|_{L^\infty_T H^2}^\gamma$).
\end{proof}

The proof of Theorem \ref{main3dsub} can be concluded 
easily by integrating the identity \eqref{F2} on $[0, T]$ and arguing
exactly as along the proof of Theorem \ref{main2d} and \ref{main3d}.

\section{Growth of Odd Sobolev norms $H^{2k+1}$}

The proofs of Theorems \ref{main2d} and \ref{main3d} 
(which have been proved in the case $m=2k$) can be adapted to the
case $m=2k+1$ by using the following modified energies:
\begin{align*}
{\mathcal E}_{2k+1}(u)&= \frac 12
\| \partial_{t}^{k}\nabla_g u\|^{2}_{L^2}
+\frac 12 \int |u|^{p-1} 
|\partial_{t}^{k}u|^2 \hbox{dvol}_g +\frac{p-1}8 \int |u|^{p-3}
 |\partial_{t}^{k} (|u|^2)|^2 \hbox{dvol}_g
 \\\nonumber -\Re \sum_{j=1}^{k-1} c_j \int
 \partial_t^j u &\partial_t^{k-j} (|u|^{p-1})
\partial_t^{k} \bar u \hbox{dvol}_g 
-\sum_{j=1}^{k-1} c_j \int \partial_t^{k-j} (|u|^{p-3})
\partial_{t}^{j}(|u|^2)\partial_{t}^{k} (|u|^2) \hbox{dvol}_g.
\end{align*}
Indeed we have the following proposition, from which ones may conclude
the proof of Theorems \ref{main2d} and \ref{main3d} 
in the case $m=2k+1$, exactly as we did in the case $m=2k$. We leave details to the reader.
\begin{proposition}
Let $u(t, x)$ be solution to \eqref{eq:1} with initial data $\varphi\in H^{2k+1}$,
then we have the following identity
\begin{align*}
\frac d{dt} {\mathcal E}_{2k+1}(u(t,x))= &\frac 12 \int \partial_t (|u|^{p-1})  |\partial_{t}^{k}u|^2 \hbox{dvol}_g
\\ & {}- \Re \sum_{j=1}^{k-1} c_j \int \partial_t^{j+1} u \partial_t^{k-j} (|u|^{p-1})
\partial_t^{k} \bar u \hbox{dvol}_g \nonumber
\\\nonumber
 & {}-\Re \sum_{j=1}^{k-1} c_j \int \partial_t^j u \partial_t^{k-j+1} (|u|^{p-1})
\partial_t^{k} \bar u \hbox{dvol}_g
 \nonumber\\
& {}+\frac{p-1}8 \int_{M^2} \partial_t (|u|^{p-3})
 |\partial_{t}^{k} (|u|^2)|^2 \hbox{dvol}_g
\nonumber \\
& {}+ \sum_{j=1}^{k-1} c_j \int \partial_t^{k-j+1} (|u|^{p-3})
\partial_{t}^{j}(|u|^2)\partial_{t}^{k} (|u|^2) \hbox{dvol}_g\nonumber \\
& {}+\sum_{j=1}^{k-1} c_j \int \partial_t^{k-j} (|u|^{p-3})
\partial_{t}^{j+1}(|u|^2)\partial_{t}^{k} (|u|^2) \hbox{dvol}_g\nonumber\\
& {}+\sum_{j=1}^k c_j \int \partial_{t}^{k}(|u|^{p-1}) 
\partial_t^j u \partial_t^{k+1-j} \bar u \hbox{dvol}_g,\nonumber
\end{align*}
where $c_j\in \R$ are suitable real numbers that can change in different lines.
\end{proposition}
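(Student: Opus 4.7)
The plan is to mirror, step by step, the proof of the even-case identity (Proposition \ref{energy}), adjusting for the fact that the leading quadratic term $\frac{1}{2}\|\partial_t^k \nabla_g u\|_{L^2}^2$ now carries one extra spatial derivative. I would first differentiate this leading term: an integration by parts in space gives $-\Re(\Delta_g \partial_t^{k+1}u, \partial_t^k u)$. Substituting the equation in the form $\partial_t^{k+1}u = i\Delta_g \partial_t^k u - i\partial_t^k(|u|^{p-1}u)$ produces two summands; the first, $-\Re(i\Delta_g^2 \partial_t^k u, \partial_t^k u)$, vanishes by self-adjointness of $\Delta_g^2$, while the second, after one further integration by parts and another use of the equation to eliminate the remaining $\Delta_g \partial_t^k u$, reduces cleanly to $-\Re(\partial_t^k(|u|^{p-1}u), \partial_t^{k+1}u)$. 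This is the key reduction by which the geometry disappears from the computation and only the NLS equation is invoked, exactly as in the even case.

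Next I would isolate, via a Leibniz expansion of $\partial_t^k(|u|^{p-1}u)$, the top-order term $|u|^{p-1}\partial_t^k u$. The corresponding contribution $-\Re\int |u|^{p-1}\partial_t^k u\,\overline{\partial_t^{k+1}u}\,\hbox{dvol}_g$ splits into
\begin{equation*}
-\tfrac{1}{2}\partial_t\!\!\int |u|^{p-1}|\partial_t^k u|^2\,\hbox{dvol}_g+\tfrac{1}{2}\int \partial_t(|u|^{p-1})|\partial_t^k u|^2\,\hbox{dvol}_g,
\end{equation*}
which simultaneously explains the presence of $\frac{1}{2}\int|u|^{p-1}|\partial_t^k u|^2$ in the definition of $\mathcal{E}_{2k+1}$ and produces the first term in the r.h.s. of the claimed identity. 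The remaining lower-order terms from the Leibniz expansion have the form $\Re\int \partial_t^{k-j}(|u|^{p-1})\partial_t^j u\,\overline{\partial_t^{k+1}u}$ and still carry a $\partial_t^{k+1}u$ factor. They are absorbed by differentiating the correction sum $-\Re\sum c_j\int \partial_t^j u\,\partial_t^{k-j}(|u|^{p-1})\,\partial_t^k\bar u$ in $\mathcal{E}_{2k+1}$: its three types of contributions, depending on whether $\partial_t$ lands on $\partial_t^j u$, on $\partial_t^{k-j}(|u|^{p-1})$, or on $\partial_t^k \bar u$, yield lines 2, 3 and 7 of the claimed r.h.s., provided the $c_j$ are chosen as appropriate multiples of $\binom{k}{j}$ so that all surviving $\partial_t^{k+1}u$ contributions recombine into line 7 after one further invocation of the equation.

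The remaining ingredients of $\mathcal{E}_{2k+1}$, namely $\frac{p-1}{8}\int |u|^{p-3}|\partial_t^k(|u|^2)|^2$ and the correction sum $-\sum c_j\int \partial_t^{k-j}(|u|^{p-3})\partial_t^j(|u|^2)\partial_t^k(|u|^2)$, are treated by a parallel computation in which $|u|^{p-3}$ plays the role previously played by $|u|^{p-1}$ and $\partial_t^k(|u|^2)$ replaces $\partial_t^k u$. The key auxiliary tool here is the elementary identity $\Delta_g(|u|^2) = u\Delta_g\bar u + \bar u\Delta_g u + 2|\nabla_g u|_g^2$, already used in the even-case proof, which converts the spatial Laplacians of $|u|^2$ into time derivatives modulo harmless lower order contributions. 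Differentiating these two pieces of the energy yields line 4 directly, together with lines 5 and 6 from the correction sum.

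The main obstacle is purely bookkeeping: tracking the combinatorics of the Leibniz expansions, making repeated use of the NLS equation to trade $\partial_t^{k+1}$ for combinations of $\Delta_g\partial_t^k$ and $\partial_t^k(|u|^{p-1}u)$, and matching the constants $c_j$ so that every too-high-derivative term cancels exactly. I expect no phenomenon genuinely new compared with Proposition \ref{energy}, which is precisely why the authors present this identity without proof.
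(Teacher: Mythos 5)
Your proposal is correct and follows essentially the same route as the paper: both reduce $\frac{d}{dt}\bigl[\frac12\|\partial_t^k\nabla_g u\|_{L^2}^2\bigr]$ to $-\Re(\partial_t^k(|u|^{p-1}u),\partial_t^{k+1}u)$ (the paper by differentiating the identity $\Re(i\partial_t^{k+1}u,\partial_t^k u)=\|\partial_t^k\nabla_g u\|_{L^2}^2+\Re(\partial_t^k(|u|^{p-1}u),\partial_t^k u)$, you by direct differentiation and two substitutions of the equation), and then proceed by the same Leibniz expansion in time and symmetrization to peel off the exact derivatives that define $\mathcal{E}_{2k+1}$. One small correction: the identity $\Delta_g(|u|^2)=u\Delta_g\bar u+\bar u\Delta_g u+2|\nabla_g u|_g^2$ plays no role in the odd case --- the $|u|^{p-3}|\partial_t^k(|u|^2)|^2$ contributions come purely from the time chain rule $\partial_t(|u|^{p-1})=\frac{p-1}{2}|u|^{p-3}\partial_t(|u|^2)$ applied inside $-\Re\int \partial_t^k(|u|^{p-1})\,u\,\partial_t^{k+1}\bar u\,\mathrm{dvol}_g$, since no spatial derivative ever lands on $|u|^2$ here.
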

\begin{proof}
First of all notice that we have
\begin{align*}\Re(i \partial_{t}^{k+1} u, \partial^{k}_{t} u) & =   \Re(\partial_{t}^{k}(-\Delta_g u), \partial^{k}_{t} u)+\Re(\partial_{t}^{k}(u|u|^{p-1}), \partial^{k}_{t} u)  \\
     & =   \| \partial_{t}^{k}\nabla_g u\|^{2}_{L^2}+\Re(\partial_{t}^{k}(u|u|^{p-1}), 
\partial^{k}_{t} u).
 \end{align*}
Due to the identity above and by taking time derivative we get:
\begin{align*}
\frac d{dt} \big (\| \partial_{t}^{k}\nabla_g u\|^{2}_{L^2}+\Re(\partial_{t}^{k}(u|u|^{p-1}), \partial^{k}_{t} u)\big) = &
\frac{d}{dt} \Re(i \partial_{t}^{k+1} u, \partial^{k}_{t} u) 
\\
 = &  \Re( i \partial_{t}^{k+2} u, \partial_{t}^{k} u) \\
  = & \Re( \partial_{t}^{k+1} (-\Delta_g u) , \partial_{t}^{k} u) +\Re( \partial_{t}^{k+1}(|u|^{p-1}u), \partial_{t}^{k} u)\\\nonumber = & \frac 12 \frac d{dt} \|\partial_t^k \nabla_g u\|_{L^2}^2 
 +\Re( \partial_{t}^{k+1}(|u|^{p-1}u), \partial_{t}^{k} u).\end{align*}
 Next we focus on the second term on the r.h.s.
 \begin{align*}
 \Re( \partial_{t}^{k+1}(|u|^{p-1}u), \partial_{t}^{k} u)
 =\frac d{dt} \Re( \partial_{t}^{k}(|u|^{p-1}u), \partial_{t}^{k} u) &-\Re( \partial_{t}^{k}(|u|^{p-1}u), \partial_{t}^{k+1} u) \\\nonumber= 
 \frac d{dt} \Re( \partial_{t}^{k}(|u|^{p-1}u), \partial_{t}^{k} u) -\Re( \partial_{t}^{k}(|u|^{p-1})u, \partial_{t}^{k+1} u)&-\Re( |u|^{p-1} \partial_{t}^{k}u, \partial_{t}^{k+1} u)
 \\\nonumber
+ \Re \sum_{j=1}^{k-1} c_j (\partial_t^j u \partial_t^{k-j} (|u|^{p-1}),
\partial_t^{k+1} u)
= 
\frac d{dt} \Re( \partial_{t}^{k}(|u|^{p-1}u), \partial_{t}^{k} u) 
&-\Re( \partial_{t}^{k}(|u|^{p-1})u, \partial_{t}^{k+1} u)\\\nonumber
- \frac 12 \frac d{dt}\int |u|^{p-1} 
|\partial_{t}^{k}u|^2 \hbox{dvol}_g  +\frac 12 \int \partial_t (|u|^{p-1}) 
|\partial_{t}^{k}u|^2 \hbox{dvol}_g
&+ \Re \sum_{j=1}^{k-1} c_j (\partial_t^j u \partial_t^{k-j} (|u|^{p-1}),
\partial_t^{k+1} u)\\\nonumber
= 
\frac d{dt} \Re( \partial_{t}^{k}(|u|^{p-1}u), \partial_{t}^{k} u) 
-\Re( \partial_{t}^{k}(|u|^{p-1})u, \partial_{t}^{k+1} u)
&- \frac 12 \frac d{dt}\int |u|^{p-1} 
|\partial_{t}^{k}u|^2 \hbox{dvol}_g \\\nonumber +\frac 12 \int \partial_t (|u|^{p-1}) 
|\partial_{t}^{k}u|^2 \hbox{dvol}_g
&+ \frac d{dt} \Re \sum_{j=1}^{k-1} c_j (\partial_t^j u \partial_t^{k-j} (|u|^{p-1}),
\partial_t^{k} u) \\\nonumber 
- \Re \sum_{j=1}^{k-1} c_j (\partial_t^{j+1} u \partial_t^{k-j} (|u|^{p-1}),
\partial_t^{k} u)&-\Re \sum_{j=1}^{k-1} c_j (\partial_t^j u \partial_t^{k-j+1} (|u|^{p-1}),
\partial_t^{k} u). 
\end{align*}
Next we deal with the second term on the r.h.s.:
\begin{align*}-\Re( \partial_{t}^{k}(|u|^{p-1})u, \partial_{t}^{k+1} u)
&=-\frac 12 \int \partial_{t}^{k}(|u|^{p-1})\partial_{t}^{k+1} (|u|^2) \hbox{dvol}_g
+\sum_{j=1}^k c_j \int \partial_{t}^{k}(|u|^{p-1}) \partial_t^j u \partial_t^{k+1-j} \bar u 
\hbox{dvol}_g
\end{align*}
and we notice that
$\partial_{t}^{k}(|u|^{p-1})=\frac{p-1}2 
\partial_t^{k-1} (\partial_t (|u|^2) |u|^{p-3}).$ 
Hence we can continue the identity above as follows
\begin{align*}
...&=-\frac{p-1}4 \int |u|^{p-3}
\partial_{t}^{k}(|u|^2)\partial_{t}^{k+1} (|u|^2) \hbox{dvol}_g
+ \sum_{j=1}^{k-1} c_j \int \partial_t^{k-j} (|u|^{p-3})
\partial_{t}^{j}(|u|^2)\partial_{t}^{k+1} (|u|^2) \hbox{dvol}_g
\\\nonumber& +\sum_{j=1}^k c_j \int \partial_{t}^{k}(|u|^{p-1}) \partial_t^j u \partial_t^{k+1-j} \bar u \hbox{dvol}_g \\\nonumber
&=-\frac{p-1}8 \frac d{dt} \int |u|^{p-3}
 |\partial_{t}^{k} (|u|^2)|^2 \hbox{dvol}_g +\frac{p-1}8 \int \partial_t (|u|^{p-3})
 |\partial_{t}^{k} (|u|^2)|^2 \hbox{dvol}_g
\\\nonumber&+ \sum_{j=1}^{k-1} c_j \int \partial_t^{k-j} (|u|^{p-3})
\partial_{t}^{j}(|u|^2)\partial_{t}^{k+1} (|u|^2) \hbox{dvol}_g
 +\sum_{j=1}^k c_j \int \partial_{t}^{k}(|u|^{p-1}) \partial_t^j u \partial_t^{k+1-j} \bar u
\hbox{dvol}_g
\end{align*}
and by elementary considerations
\begin{align*}
...&=-\frac{p-1}8 \frac d{dt} \int |u|^{p-3}
 |\partial_{t}^{k} (|u|^2)|^2 \hbox{dvol}_g +\frac{p-1}8 \int \partial_t (|u|^{p-3})
 |\partial_{t}^{k} (|u|^2)|^2 \hbox{dvol}_g
\\\nonumber&+ \frac d{dt} \sum_{j=1}^{k-1} c_j \int \partial_t^{k-j} (|u|^{p-3})
\partial_{t}^{j}(|u|^2)\partial_{t}^{k} (|u|^2) \hbox{dvol}_g
+\sum_{j=1}^{k-1} c_j \int \partial_t^{k-j+1} (|u|^{p-3})
\partial_{t}^{j}(|u|^2)\partial_{t}^{k} (|u|^2) \hbox{dvol}_g
\\\nonumber &+\sum_{j=1}^{k-1} c_j \int \partial_t^{k-j} (|u|^{p-3})
\partial_{t}^{j+1}(|u|^2)\partial_{t}^{k} (|u|^2) \hbox{dvol}_g
+\sum_{j=1}^k c_j \int\partial_{t}^{k}(|u|^{p-1}) \partial_t^{j} 
u \partial_t^{k+1-j} \bar u
\hbox{dvol}_g.
\end{align*}
The proof is complete.
\end{proof}

\end{document}